\DeclareMathOperator{\st}{st}
\DeclareMathOperator{\dl}{dl}
\DeclareMathOperator{\lk}{lk}
\DeclareMathOperator{\Sd}{Sd}
\newtheorem{thm}{Theorem}[section]
\newtheorem{prop}{Proposition}[section]
\newtheorem{lem}{Lemma}[section]
\newtheorem{defn}{Definition}[section]
\title{On the Topology of Weakly and Strongly Separated Set Complexes}
\author{Daniel Hess, Benjamin Hirsch}
\address{School of Mathematics, University of Minnesota, Minneapolis, MN 55455}
\email{hessx144@umn.edu}
\address{Department of Mathematics, Harvard University, Cambridge, MA 02138}
\email{bhirsch@college.harvard.edu}
\begin{document}
\maketitle

\begin{abstract} 
We examine the topology of the clique complexes of the graphs of weakly and strongly separated subsets of the set $[n]=\{1,2,\dots,n\}$, which, after deleting all cone points, we denote by $\hat{\Delta}_{ws}(n)$ and $\hat{\Delta}_{ss}(n)$, respectively. In particular, we find that $\hat{\Delta}_{ws}(n)$ is contractible for $n\geq4$, while $\hat{\Delta}_{ss}(n)$ is homotopy equivalent to a sphere of dimension $n-3$. We also show that our homotopy equivalences are equivariant with respect to the group generated by two particular symmetries of $\hat{\Delta}_{ws}(n)$ and $\hat{\Delta}_{ss}(n)$: one induced by the set complementation action on subsets of $[n]$ and another induced by the action on subsets of $[n]$ which replaces each $k\in[n]$ by $n+1-k$.
\end{abstract}

\section{Introduction}

In \cite{Leclerc}, Leclerc and Zelevinsky define the relations of \emph{strong separation} and $\emph{weak separation}$ on the subsets of $[n]=\{1,2,\dots,n\}$. For $A$ and $B$ disjoint subsets of $[n]$, we say that $A$ \emph{lies entirely to the left of} $B$, written $A\prec B$, if $\max(A)<\min(B)$. We say that $A$ \emph{surrounds} $B$ if $A$ can be partitioned into a disjoint union $A=A_1\sqcup A_2$, where $A_1\prec B\prec A_2$.

\begin{defn}
We say that subsets $A,B\subset[n]$ are \emph{strongly separated} from one another if either $A\smallsetminus B\prec B\smallsetminus A$ or $B\smallsetminus A\prec A\smallsetminus B$. 
\end{defn}

\begin{defn}
We say that subsets $A,B\subset[n]$ are \emph{weakly separated} from one another if at least one of the following two conditions holds:\begin{itemize}
\item $|A|\leq |B|$ and $A\smallsetminus B$ surrounds $B\smallsetminus A$
\item $|B|\leq |A|$ and $B\smallsetminus A$ surrounds $A\smallsetminus B$
\end{itemize}
\end{defn}

For each of these relations, we may construct a graph whose vertices are the subsets of $[n]$ and a simplicial complex which is the clique complex of this graph. After removing \emph{frozen} vertices, meaning those corresponding to sets that are strongly or weakly separated from every subset of $[n]$, we denote what remains by $\hat{\Delta}_{ss}(n)$ and $\hat{\Delta}_{ws}(n)$ for the strongly and weakly separated complexes, respectively. In either case, the frozen vertices correspond to initial or final segments of $[n]$, of the form $\{1,2, \dots, k\}$ or $\{k, k+1, \dots, n\}$.

For example, if $n=1$ or $n=2$, then $\hat{\Delta}_{ss}(n)=\hat{\Delta}_{ws}(n)=\varnothing$. The case $n=3$ is pictured in Figure 1 and the case $n=4$ is pictured in Figure 2.

\begin{figure}
\begin{tikzpicture}
[inner sep=0pt, minimum size=7mm]

\node (2) at (-1,0) [circle, draw] {2};
\node (13)	 at (1,0) [circle, draw] {13};
\end{tikzpicture}
\caption{The simplicial complex $\hat{\Delta}_{ss}(3)=\hat{\Delta}_{ws}(3)$.}
\end{figure}
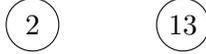

\begin{figure}
\begin{tikzpicture}
[inner sep=0pt, minimum size=7mm]

\node (2) at (-2, 2) [circle, draw, fill=white] {2};
\node (3) at (2,2) [circle, draw, fill=white] {3};
\node (23) at (0,1) [circle, draw, fill=white] {23};
\node (24) at (-1,0) [circle, draw, fill=white] {24};
\node (13) at (1,0) [circle, draw, fill=white] {13};
\node (14) at (0,-1) [circle, draw, fill=white] {14};
\node (124) at (-2,-2) [circle, draw, fill=white] {124};
\node (134) at (2,-2) [circle, draw, fill=white] {134};

\draw[-] (2) to (3);
\draw[-] (2) to (23);
\draw[-] (2) to (24);
\draw[-] (2) to (124);
\draw[-] (3) to (23);
\draw[-] (3) to (13);
\draw[-] (3) to (134);
\draw[-] (124) to (24);
\draw[-] (124) to (14);
\draw[-] (134) to (13);
\draw[-] (134) to (14);
\draw[-] (124) to (134);
\draw[-] (24) to (23);
\draw[-] (23) to (13);
\draw[-] (13) to (14);
\draw[-] (14) to (24);

\begin{pgfonlayer}{background}
\fill[gray!30] (-2,2) to (2,2) to (0,1);
\fill[gray!30] (-2,2) to (0,1) to (-1,0);
\fill[gray!30] (-2,2) to (-1,0) to (-2,-2);
\fill[gray!30] (2,2) to (0,1) to (1,0);
\fill[gray!30] (2,2) to (1,0) to (2,-2);
\fill[gray!30] (-2,-2) to (-1,0) to (0,-1);
\fill[gray!30] (2,-2) to (1,0) to (0,-1);
\fill[gray!30] (-2,-2) to (2,-2) to (0,-1);
\end{pgfonlayer}

\node (2) at (4, 2) [circle, draw, fill=white] {2};
\node (3) at (8,2) [circle, draw, fill=white] {3};
\node (23) at (6,1) [circle, draw, fill=white] {23};
\node (24) at (5,0) [circle, draw, fill=white] {24};
\node (13) at (7,0) [circle, draw, fill=white] {13};
\node (14) at (6,-1) [circle, draw, fill=white] {14};
\node (124) at (4,-2) [circle, draw, fill=white] {124};
\node (134) at (8,-2) [circle, draw, fill=white] {134};

\draw[-] (2) to (3);
\draw[-] (2) to (23);
\draw[-] (2) to (24);
\draw[-] (2) to (124);
\draw[-] (3) to (23);
\draw[-] (3) to (13);
\draw[-] (3) to (134);
\draw[-] (124) to (24);
\draw[-] (124) to (14);
\draw[-] (134) to (13);
\draw[-] (134) to (14);
\draw[-] (124) to (134);
\draw[-] (24) to (23);
\draw[-] (23) to (13);
\draw[-] (13) to (14);
\draw[-] (14) to (24);
\draw[-] (23) to (14);

\begin{pgfonlayer}{background}
\fill[gray!30] (4,2) to (8,2) to (6,1);
\fill[gray!30] (4,2) to (6,1) to (5,0);
\fill[gray!30] (4,2) to (5,0) to (4,-2);
\fill[gray!30] (8,2) to (6,1) to (7,0);
\fill[gray!30] (8,2) to (7,0) to (8,-2);
\fill[gray!30] (4,-2) to (5,0) to (6,-1);
\fill[gray!30] (8,-2) to (7,0) to (6,-1);
\fill[gray!30] (4,-2) to (8,-2) to (6,-1);
\fill[gray!30] (5,0) to (6,1) to (6,-1);
\fill[gray!30] (6,-1) to (6,1) to (7,0);
\end{pgfonlayer}

\end{tikzpicture}

\caption{The simplicial complexes $\hat{\Delta}_{ss}(4)$ and $\hat{\Delta}_{ws}(4)$, respectively.}
\end{figure}
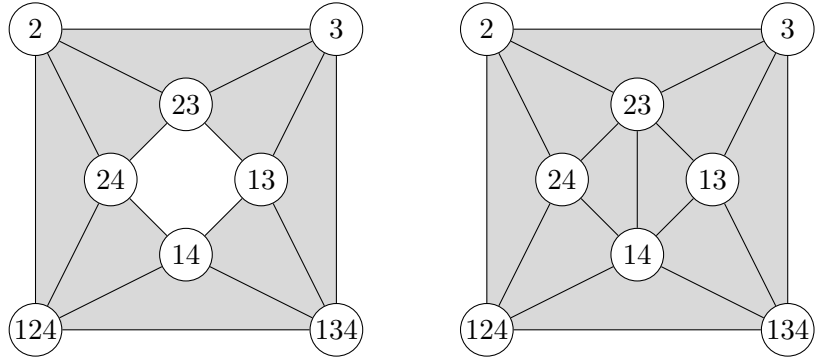

(Note that in these figures, as well as throughout this paper, we omit the braces and commas when referring to subsets of $[n]$. For example, instead of $\{1,2,3,4\}\subset[5]$ we write $1234\subset[5]$.)

Letting $\alpha$ denote the action of set complementation on subsets of $[n]$ and $w_0$ denote the action on subsets of $[n]$ which replaces each $k\in[n]$ by $n+1-k$, we note that both actions respect the relations of strong and weak separation, so that they induce symmetries on $\hat{\Delta}_{ss}(n)$ and $\hat{\Delta}_{ws}(n)$. We let $G=\left<\alpha,w_0\right>\cong \mathbf{Z}/2\mathbf{Z}\times\mathbf{Z}/2\mathbf{Z}$ be the group generated by these symmetries. 

In \textsection3 and \textsection4 of this paper, we prove the following two theorems, answering questions originally posed by V. Reiner:

\begin{thm}\label{ws}
The simplicial complex $\hat{\Delta}_{ws}(n)$ is $G$-contractible for $n\geq4$.
\end{thm}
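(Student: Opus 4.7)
The plan is to exhibit a $G$-equivariant deformation retraction of $\hat{\Delta}_{ws}(n)$ onto a contractible $G$-invariant subcomplex that admits a $G$-fixed point. A natural target is the edge $e$ spanned by $V:=\{2,3,\dots,n-1\}$ and its complement $\alpha(V)=\{1,n\}$: both vertices are $w_0$-fixed (since they are palindromic), they are interchanged by $\alpha$, and they are weakly separated because the partition $\{1\}\prec V\prec\{n\}$ witnesses that $\alpha(V)$ surrounds $V$. Thus $e$ is a $G$-invariant edge, its barycenter is $G$-fixed, and collapsing $e$ to that barycenter is a $G$-equivariant contraction. It therefore suffices to construct a $G$-equivariant strong deformation retraction of $\hat{\Delta}_{ws}(n)$ onto $e$.

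The most direct route is equivariant discrete Morse theory: define a $G$-invariant acyclic matching on the face poset of $\hat{\Delta}_{ws}(n)$ whose only unmatched cells are the three faces of $e$. Each face $\sigma\not\subseteq e$ should be paired with $\sigma\,\triangle\,\{v(\sigma)\}$ for a pivot $v(\sigma)$ chosen by a rule that commutes with $\alpha$ and $w_0$. A plausible recipe is to locate an extremal element of $[n]$ at which $\sigma$ is most asymmetric with respect to $V$ and take $v(\sigma)$ to be an associated vertex; the $w_0$-symmetry of $V$ then keeps the rule $G$-covariant, while $\alpha$-symmetry is built in by pairing cells "above" $e$ with cells "below" $e$ consistently. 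An equivalent strategy would be to iteratively delete $G$-orbits of vertices whose residual links are contractible, collapsing $\hat{\Delta}_{ws}(n)$ down to $e$ one orbit at a time.

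The principal obstacle is satisfying all three Morse-matching requirements simultaneously: matched partners must themselves be faces, the matching must be acyclic, and it must be $G$-equivariant. Equivariance is the binding constraint, since $G$ has no pointwise-fixed vertex in $\hat{\Delta}_{ws}(n)$, so the pivot rule must co-vary with $G$ rather than being anchored at a single element. Verifying that each matched partner is a face is also delicate, because weak separation is not preserved under arbitrary set modifications and must be checked against each remaining vertex of $\sigma$; indeed, even the condition "every vertex is weakly separated from $V$ or from $\alpha(V)$" does not lift to a cover of $\hat{\Delta}_{ws}(n)$ by $\st(V)\cup\st(\alpha(V))$, since one finds weakly separated pairs with one endpoint in each star. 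I expect these issues to be resolved by a careful case analysis on how $\sigma$ meets $V$ and $\alpha(V)$, possibly combined with an induction on $n$ whose base case $n=4$ is verifiable directly from Figure~2.
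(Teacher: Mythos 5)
Your setup is sound and, in fact, you have correctly identified the key $G$-invariant edge $e$ spanned by $V=\{2,\dots,n-1\}$ and $\alpha(V)=\{1,n\}$; this pair plays the same central role in the paper, where it arises as the face $\{1n,\,23\cdots(n-1)\}$ whose star is contained in every intersection of the cover. Your observation that $e$ is $G$-invariant with $G$-fixed barycenter, and that contracting $e$ to its barycenter is $G$-equivariant, is also correct. However, the proposal stops precisely at the point where the actual proof would have to begin: the $G$-invariant acyclic matching whose only unmatched cells are the three faces of $e$ is never constructed. What you offer is a ``plausible recipe'' (pivot at an extremal element where $\sigma$ is ``most asymmetric with respect to $V$'') followed immediately by a list of the three obstructions --- matched partners must be faces, the matching must be acyclic, and it must commute with $\alpha$ and $w_0$ --- that you acknowledge you have not overcome. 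Since $G$ has no fixed vertex in $\hat{\Delta}_{ws}(n)$, the pivot rule cannot be anchored, and showing that the symmetric difference $\sigma\,\triangle\,\{v(\sigma)\}$ remains a clique of the weak-separation graph is a nontrivial combinatorial claim about weak separation that you do not verify. As written this is a research plan, not a proof; the gap is exactly the hard part of the argument.

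For comparison, the paper takes a completely different route that sidesteps discrete Morse theory. It covers $\hat{\Delta}_{ws}(n)$ by the $2(n-2)$ deletions $\dl(k)$ and $\dl([n]\smallsetminus k)$ for $k\in\{2,\dots,n-1\}$, observes that this cover is $G$-invariant with nerve a simplex (all intersections contain $\st(\{1n,\,23\cdots(n-1)\})$), and then proves via an equivariant nerve lemma that each intersection $\bigcap_{i\in\sigma}\Delta_i$ is $G_\sigma$-contractible. That last step is itself handled by a second nerve argument --- covering the intersection by stars of $1n$, $23\cdots(n-1)$, and the remaining singletons and cosingletons --- where each finite intersection of stars is shown to have an explicit cone point, together with an induction on ``free complementary pairs'' to reduce to this base case. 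The payoff of the paper's approach is that every contractibility check reduces to exhibiting a cone point, which Proposition~\ref{conecontract} immediately upgrades to $G_\sigma$-contractibility, and the inductive structure keeps the case analysis manageable. If you wish to pursue your Morse-theoretic plan, you would need to cite an equivariant discrete Morse theorem guaranteeing that a $G$-invariant acyclic matching yields a $G$-homotopy equivalence, and then actually exhibit and verify the matching; the induction-on-$n$ fallback you mention would also need a concrete mechanism for extending a matching on $\hat{\Delta}_{ws}(n-1)$ to $\hat{\Delta}_{ws}(n)$, which is not obvious since passing from $[n-1]$ to $[n]$ changes which pairs are weakly separated.
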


\begin{thm}\label{ss}
The simplicial complex $\hat{\Delta}_{ss}(n)$ is $G$-homotopy equivalent to the $(n-3)$-sphere $S^{n-3}$.
\end{thm}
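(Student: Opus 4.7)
I propose induction on $n$, tracking the $G$-action throughout. The base case $n=3$ is immediate from Figure~1: $\hat{\Delta}_{ss}(3)$ is the pair of isolated vertices $\{2\}$ and $\{1,3\}$, i.e.\ $S^{0}=S^{n-3}$, with $\alpha$ and $w_0$ both swapping the two vertices in the expected way.

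For the inductive step my target is a $G$-equivariant realization of $\hat{\Delta}_{ss}(n)$ as a suspension of $\hat{\Delta}_{ss}(n-1)$, so that $\Sigma S^{n-4}=S^{n-3}$ closes the induction. The strategy is to locate a $G$-invariant pair $\{v,v'\}$ of non-frozen vertices --- a natural first candidate is $v=\{2\}$ and $v'=\alpha(v)=[n]\setminus\{2\}$ --- with two properties: (a) $v$ and $v'$ are not strongly separated, and (b) every facet of $\hat{\Delta}_{ss}(n)$ lies in $\st(v)\cup\st(v')$. Given (a) and (b), $\st(v)\cap\st(v')=\lk(v)\cap\lk(v')$, and the standard pushout on cones yields a $G$-equivariant homotopy equivalence
\[
\hat{\Delta}_{ss}(n) \;\simeq\; \Sigma\bigl(\lk(v)\cap\lk(v')\bigr).
\]
An explicit combinatorial identification $\lk(v)\cap\lk(v')\cong \hat{\Delta}_{ss}(n-1)$ --- built by classifying the subsets of $[n]$ doubly strongly separated from $v$ and $v'$ (they split into two natural classes according to their intersection with $\{1,2\}$, and this split absorbs into a relabeling of the ground set) and checking that strong separation on the new ground set corresponds to strong separation on the old --- then closes the induction and carries the $G$-action automatically.

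The main obstacle is condition (b). The first-guess pair $\{v,v'\}$ is visibly too small already for $n=4$, where facets such as $\{3,23,13\}$ meet neither $\{2\}$ nor $\{1,3,4\}$. To fix this, one must either enlarge $\{v,v'\}$ to a $G$-invariant family of three or four distinguished vertices and replace the suspension pushout by a larger one whose nerve is still homotopy equivalent to $S^{n-3}$ (via the nerve lemma), or perform a $G$-equivariant sequence of elementary collapses that trims $\hat{\Delta}_{ss}(n)$ onto a subcomplex for which a two-star cover works. Selecting the right $G$-invariant family and matching the resulting joint link with $\hat{\Delta}_{ss}(n-1)$ is where the delicate combinatorial input lies.

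If no clean suspension-style decomposition is available, the fallback is $G$-equivariant discrete Morse theory: one constructs an acyclic matching on the face poset of $\hat{\Delta}_{ss}(n)$ invariant under both $\alpha$ and $w_0$ (for instance, a matching built from a lexicographic rule symmetrized by averaging over the $G$-orbits) that leaves exactly two critical cells, one of dimension $0$ and one of dimension $n-3$. The resulting collapse produces a $G$-CW model with the cell structure of $S^{n-3}$, giving the theorem and the equivariance simultaneously.
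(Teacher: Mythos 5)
Your first idea (a $G$-invariant suspension from a two-star cover) fails, as you noticed, and the proposed repairs are left speculative rather than carried out, so there is a real gap. The nerve-lemma fallback in particular has a concrete obstruction you do not address: if you enlarge to the full $G$-invariant family of stars of all $2(n-2)$ singletons and their complements, then $\st(k)\cap\st([n]\smallsetminus k)$ still contains the empty face, so \emph{every} pairwise intersection is nonempty and the nerve of a star cover degenerates to a simplex rather than a sphere (and when the intersections are not contractible the nerve lemma gives no conclusion at all). Picking ``three or four'' vertices is also not $G$-invariant in a scale-appropriate way once $n>4$. The discrete-Morse fallback is likewise only a wish: producing a $G$-invariant acyclic matching with two critical cells is precisely the hard work, and symmetrizing a lexicographic matching over $G$-orbits generally destroys acyclicity.

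The paper's actual route does use the singletons and their complements, but not as a cover. Let $K\subset\hat{\Delta}_{ss}(n)$ be the vertex-induced subcomplex on all singletons $k$ and complements $[n]\smallsetminus k$ for $k\in\{2,\dots,n-1\}$. Since each such vertex is strongly separated from every other except its own complement, $K$ is the boundary of an $(n-2)$-dimensional cross-polytope, hence $|K|\cong S^{n-3}$, and $G$ acts on it exactly as $\alpha=$ antipode, $w_0=$ axis permutation. The theorem then reduces to exhibiting a $G$-equivariant deformation retraction $\pi\colon|\hat{\Delta}_{ss}(n)|\to|K|$. The paper defines this on barycentric vertices by $\pi'(\sigma)=\{v\in K: \sigma\cup\{v\}\in\hat{\Delta}_{ss}(n),\ \sigma\cup\{\alpha(v)\}\notin\hat{\Delta}_{ss}(n)\}$; the crucial combinatorial input --- the analogue of the key lemma your argument is missing --- is Lemma~4.3, which shows $\pi'(\sigma)\neq\varnothing$ for every face $\sigma$, i.e.\ every face is strongly separated from exactly one member of some pair $(k,[n]\smallsetminus k)$. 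One then checks $\pi'$ maps chains to faces of $K$, restricts to the identity on $K$, and, via the equivariant contractible carrier lemma (Lemma~\ref{gcarry}) applied to the carrier $\tau\mapsto\st(\sigma_1)$, that $\iota\circ\pi$ is $G$-homotopic to the identity. No induction on $n$ is used and none is needed; the sphere is sitting inside $\hat{\Delta}_{ss}(n)$ already, and the content is the retraction rather than a recursive decomposition.
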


The action of $G$ on $\hat{\Delta}_{ss}(n)$ corresponds to an action on the sphere $S^{n-3}$ (with the usual embedding in $\mathbf{R}^{n-2}$) where $\alpha$ acts as the antipodal map and where $w_0$ acts by permuting the axes as follows: if we use the labels $x_2$ through $x_{n-1}$, we replace each $x_k$ with $x_{n+1-k}$.

To prove Theorem \ref{ws} we will formulate and apply an equivariant nerve lemma to a suitable covering of $\hat{\Delta}_{ws}(n)$. To prove Theorem \ref{ss} we will find a $G$-equivariant deformation retraction onto a subcomplex of $\hat{\Delta}_{ss}(n)$ that is the boundary of an $(n-2)$-dimensional cross-polytope, giving a $G$-equivariant homotopy equivalence between $\hat{\Delta}_{ss}(n)$ and $S^{n-3}$. In \textsection5, we consider the question of determining the homeomorphism types of $\hat{\Delta}_{ss}(n)$ and $\hat{\Delta}_{ws}(n)$. 

\section{Preliminaries}

\subsection{Simplicial Complexes}

A \emph{simplicial complex} $\Delta$ is a nonempty collection of finite sets $\sigma$ called \emph{faces} such that if $\sigma\in\Delta$ and $\tau\subset\sigma$, then $\tau\in\Delta$. The singleton subsets $v\in\Delta$ are called the \emph{vertices} of $\Delta$, and the maximal (with respect to inclusion) faces are called the \emph{facets} of $\Delta$. We say that $\Delta$ is \emph{pure} if all facets of $\Delta$ have the same dimension. We denote by $|\Delta|$ the \emph{geometric realization} of $\Delta$ and by $|\sigma|$ the geometric realization of a face $\sigma\in\Delta$.

We will use several constructions involving simplicial complexes: 

\begin{itemize}
\item Given a graph $G$, the \emph{clique complex} of $G$ is the simplicial complex whose faces are precisely the cliques (complete subgraphs) in $G$. 
\item Given a simplicial complex $\Delta$, the \emph{face poset} of $\Delta$, denoted $F(\Delta)$, is the set of faces $\sigma$ of $\Delta$ ordered by inclusion. 
\item Conversely, given a poset $P$, the \emph{order complex} of $P$, denoted $\Delta(P)$, is the simplicial complex whose vertices are the elements of $P$ and whose faces are the chains in $P$. For any simplicial complex $\Delta$, the order complex of the face poset of $\Delta$ is called the \emph{barycentric subdivision} of $\Delta$ and is denoted by $\Sd(\Delta)$. Hence the vertices of $\Sd(\Delta)$ are the faces of $\Delta$ and the faces of $\Sd(\Delta)$ are chains $\tau=\{\sigma_1\subset\cdots\subset\sigma_m\}$ of faces $\sigma_i$ of $\Delta$. We note that there is a natural homeomorphism $|\Delta|\cong|\Sd(\Delta)|$.
\end{itemize}

Let $\Delta$ be a simplicial complex. For a face $\sigma\in\Delta$ we define subcomplexes of $\Delta$ called the the \emph{star}, \emph{deletion}, and \emph{link} of $\sigma$ by 
\begin{align*}
\st(\sigma)&=\{\tau\in\Delta : \tau\cup\sigma\in\Delta\},\\
\dl(\sigma)&=\{\tau\in\Delta : \tau\cap\sigma=\varnothing\},\\
\lk(\sigma)&=\{\tau\in\Delta : \tau\cup\sigma\in\Delta \text{ and } \tau\cap\sigma=\varnothing\},
\end{align*}
respectively. The following proposition is a useful relationship between stars in a clique complex:

\begin{prop}
Let $\Delta$ be the clique complex of a graph $G$ and let $\sigma,\tau\in\Delta$ be faces such that $\sigma\cup\tau\in\Delta$. Then $\st(\sigma)\cap\st(\tau)=\st(\sigma\cup\tau)$. 
\end{prop}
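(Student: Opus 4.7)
The proof will establish the two set-theoretic containments separately. I would begin by unwinding the definition: a face $\rho\in\Delta$ lies in $\st(\sigma)\cap\st(\tau)$ exactly when both $\rho\cup\sigma$ and $\rho\cup\tau$ are faces of $\Delta$, whereas $\rho\in\st(\sigma\cup\tau)$ requires that $\rho\cup\sigma\cup\tau\in\Delta$. The assumption $\sigma\cup\tau\in\Delta$ is a hypothesis of the proposition, and I expect it to play a role in the harder direction.

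For the containment $\st(\sigma\cup\tau)\subseteq\st(\sigma)\cap\st(\tau)$, I would argue directly from the definition of a simplicial complex: if $\rho\cup\sigma\cup\tau\in\Delta$, then since $\rho\cup\sigma$ and $\rho\cup\tau$ are both subsets of $\rho\cup\sigma\cup\tau$, they also belong to $\Delta$, so $\rho\in\st(\sigma)\cap\st(\tau)$. This direction does not use the clique hypothesis at all.

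The reverse containment $\st(\sigma)\cap\st(\tau)\subseteq\st(\sigma\cup\tau)$ is where the clique complex structure becomes essential, and this is the step I expect to be the main obstacle (it would fail, for instance, on the boundary of a $2$-simplex). Given $\rho\in\st(\sigma)\cap\st(\tau)$, I need to show $\rho\cup\sigma\cup\tau\in\Delta$. Because $\Delta$ is the clique complex of $G$, it suffices to verify that every pair of distinct vertices $u,v\in\rho\cup\sigma\cup\tau$ is connected by an edge in $G$. I would split into cases based on which of the three sets $\rho\cup\sigma$, $\rho\cup\tau$, $\sigma\cup\tau$ contains the pair $\{u,v\}$. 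For any choice of $u$ and $v$, at least one of these three sets contains both: if $u\in\sigma$ and $v\in\tau$ (or vice versa), then $\{u,v\}\subseteq\sigma\cup\tau$; if $u\in\rho$ or $v\in\rho$, then $\{u,v\}$ lies in $\rho\cup\sigma$ or $\rho\cup\tau$ depending on where the other vertex sits. In each case, $\{u,v\}$ is a subset of a known face of $\Delta$, hence an edge of $G$.

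With every pair in $\rho\cup\sigma\cup\tau$ forming an edge of $G$, the clique complex hypothesis gives $\rho\cup\sigma\cup\tau\in\Delta$, so $\rho\in\st(\sigma\cup\tau)$. Combining the two containments yields the proposition. The only subtlety worth flagging is the explicit use of the hypothesis $\sigma\cup\tau\in\Delta$ to handle pairs with one vertex in $\sigma\setminus\rho$ and the other in $\tau\setminus\rho$; without it, one cannot conclude that such a pair is an edge.
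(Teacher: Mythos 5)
Your proof is correct and takes essentially the same approach as the paper: both arguments reduce to the observation that $\rho\cup\sigma\cup\tau$ is a clique because every pair of its vertices lies in one of the three known cliques $\rho\cup\sigma$, $\rho\cup\tau$, or $\sigma\cup\tau$. The paper states this as a single biconditional about cliques while you separate it into two containments and spell out the pair-by-pair case check, but the underlying idea is the same.
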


\begin{proof} 
If $\rho$ is a set whose elements form a clique in $G$, then $\rho\cup\sigma\cup\tau$ forms a clique if and only if $\rho\cup\sigma$, $\rho\cup\tau$, and $\sigma\cup\tau$ form cliques. By assumption $\sigma\cup\tau$ forms a clique, and hence $\rho\cup\sigma\cup\tau$ is a face of $\Delta$ if and only if $\rho\cup\sigma$ and $\rho\cup\tau$ are faces. Therefore $\rho\in\st(\sigma\cup\tau)$ if and only if $\rho\in\st(\sigma)\cap\st(\tau)$, as desired. 
\end{proof}

A \emph{cone point} of a simplicial complex $\Delta$ is defined as a vertex $v\in\Delta$ such that $\sigma\cup\{v\}$ is a face of $\Delta$ for any face $\sigma\in\Delta$. We say that a face $\tau\in\Delta$ is a \emph{cone face} if all elements of $\tau$ are cone points of $\Delta$; that is, if $\sigma\cup\tau$ is a face of $\Delta$ for all faces $\sigma\in\Delta$. In particular, note that any simplicial complex with a cone point (or cone face) is contractible by a straight-line homotopy to the cone point (or to any point on the geometric realization of the cone face). Hence any star $\st(\sigma)$ is contractible, since any vertex $v\in\sigma$ is a cone point of $\st(\sigma)$.

Finally, we recall the usual statement of the nerve lemma:

\begin{lem}[Nerve Lemma] \cite[10.6]{Bjorner}
Let $\Delta$ be a simplicial complex and let $\{\Delta_i\}_{i\in I}$ be a family of subcomplexes such that $\Delta=\bigcup_{i\in I}\Delta_i$. If every nonempty finite intersection $\Delta_{i_1}\cap\Delta_{i_2}\cap\cdots\cap\Delta_{i_k}$ is contractible, then $\Delta$ and the nerve $\mathcal{N}(\{\Delta_i\})$ are homotopy equivalent.
\end{lem}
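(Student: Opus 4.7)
The plan is to prove this by induction on $|I|$, with the infinite case reduced to the finite one by a compactness argument on individual faces of $\Delta$. The base case $|I|=1$ is immediate: $\Delta=\Delta_1$ is a one-fold intersection, hence contractible by hypothesis, and $\mathcal{N}(\{\Delta_1\})$ is a single point.

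For the inductive step, fix $n\in I$ and let $A=\bigcup_{i\neq n}\Delta_i$. Both subcovers $\{\Delta_i\}_{i\neq n}$ of $A$ and $\{\Delta_i\cap\Delta_n\}_{i\neq n}$ of $A\cap\Delta_n$ inherit the nerve hypothesis, since every nonempty finite intersection in either subcover equals a nonempty finite intersection of the original $\Delta_i$'s. By induction, $A\simeq \mathcal{N}(\{\Delta_i\}_{i\neq n})$ and $A\cap\Delta_n\simeq \lk_{\mathcal{N}(\{\Delta_i\})}(n)$; moreover $\Delta_n$ is itself contractible (a one-fold intersection) and $\st_{\mathcal{N}(\{\Delta_i\})}(n)$ is contractible as a cone on $\lk(n)$. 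Since $\Delta=A\cup\Delta_n$ and $\mathcal{N}(\{\Delta_i\})=\mathcal{N}(\{\Delta_i\}_{i\neq n})\cup\st(n)$ are pushouts of subcomplex inclusions (which are cofibrations) along $A\cap\Delta_n$ and $\lk(n)$, respectively, the gluing lemma for homotopy pushouts of cofibrations yields $\Delta\simeq\mathcal{N}(\{\Delta_i\})$.

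The main obstacle is the compatibility of the four inductive homotopy equivalences: the equivalence on $A$ need not a priori restrict to the chosen equivalence on $A\cap\Delta_n$, and some care is required to line them up before applying the gluing lemma. A clean way to sidestep this bookkeeping is to replace the induction by a single homotopy-colimit argument. Introduce the auxiliary complex whose simplices are chains in the poset of pairs $(\sigma,J)$ with $\sigma$ a face of $\Delta$ contained in $\bigcap_{j\in J}\Delta_j$ for some nonempty finite $J\subseteq I$; this complex projects to $\Sd(\Delta)$ by forgetting $J$ and to $\Sd(\mathcal{N}(\{\Delta_i\}))$ by forgetting $\sigma$. Quillen's Theorem A applied to both projections---the fibers being barycentric subdivisions of the $\Delta_J$'s (contractible by hypothesis) on one side and upward-directed sub-posets of the finite subsets of $I$ (contractible for structural reasons) on the other---simultaneously identifies both ends and gives the desired homotopy equivalence.
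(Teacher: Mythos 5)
The paper does not actually prove this lemma; it is cited from Bj\"orner \cite[10.6]{Bjorner}. However, the paper's proof of its equivariant refinement (Lemma 2.5) implicitly gives a proof upon taking $G$ trivial: one applies the Quillen fiber lemma to the single map $f\colon F(\Delta)\to F(\mathcal{N})$, $\sigma\mapsto\{i\in I:\sigma\in\Delta_i\}$, whose fibers $f^{-1}(F(\mathcal{N})_{\geq\tau})$ are precisely the face posets of $\bigcap_{i\in\tau}\Delta_i$, hence contractible by hypothesis. Measured against that, your second, ``homotopy colimit'' argument is the right idea but is a more elaborate version of the same mechanism: introducing the auxiliary poset of pairs $(\sigma,J)$ and running Quillen's Theorem A twice amounts to a Mayer--Vietoris blow-up of the one-step fiber argument. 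It can be made to work, but the fiber descriptions as you wrote them are not correct: the fiber of the projection to $\Sd(\Delta)$ over a face $\sigma_0$ is not a poset of finite subsets of $I$ but the poset of all pairs $(\sigma,J)$ with $\sigma\subseteq\sigma_0$ and $\sigma\in\bigcap_{j\in J}\Delta_j$ (contractible by a zig-zag through adjoining and then collapsing to a fixed $i_0$ with $\sigma_0\in\Delta_{i_0}$), and likewise the fiber over a face of the nerve records both coordinates, not just the face of $\bigcap_{j\in J}\Delta_j$. You would save yourself this bookkeeping, and obtain exactly what is needed, by applying Quillen's fiber lemma once to $f$ as above (noting that $f$ is order-reversing, so one works with $F(\mathcal{N})^{\mathrm{op}}$ or with the $f^{-1}(P_{\geq\tau})$ version of the hypothesis, which is harmless since order complexes of a poset and its opposite agree).

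Your first, inductive argument has the gap you yourself flag, and it is a real one. The four homotopy equivalences produced by the inductive hypothesis on $A$, $A\cap\Delta_n$, $\Delta_n$, and the corresponding pieces of the nerve are not a priori part of a single map of pushout diagrams, so the gluing lemma for homotopy pushouts of cofibrations does not apply directly. This can be repaired (for instance by constructing a single carrier compatible with the cover, or by replacing the naive induction with a colimit-over-the-cover formulation where the compatibility is built in), but as written it is not a proof. In addition, the reduction of the infinite case to the finite one by ``compactness on individual faces'' is asserted but not carried out; the Quillen fiber argument avoids this entirely since it handles arbitrary index sets at once.
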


Here, the \emph{nerve} $\mathcal{N}(\{\Delta_i\})$ of the covering $\{\Delta_i\}_{i\in I}$ is defined as the simplicial complex on the vertex set $I$ such that a finite subset $\sigma\subset I$ is a face of $\mathcal{N}(\{\Delta_i\})$ if and only if $\bigcap_{i\in\sigma}\Delta_i\neq\varnothing$.

\subsection{Equivariant Tools}

In the following, we let $G$ be any group. 

A \emph{$G$-simplicial complex} is a simplicial complex together with an action of $G$ on the vertices that takes faces to faces. A \emph{$G$-topological space} (or \emph{$G$-space}) is a topological space together with a continuous action of $G$. A map $f: X \to Y$ between $G$-topological spaces is called \emph{$G$-continuous} (or a \emph{$G$-map}) if it is continuous and respects the action of $G$ so that $f(gx)=gf(x)$ for all $x\in X$. 

A \emph{contractible carrier} from a simplicial complex $\Delta$ to a topological space $X$ is an inclusion-preserving map sending faces of $\Delta$ to contractible subspaces of $X$. We say that a map $f\colon |\Delta|\to X$ is \emph{carried} by a contractible carrier $C$ if $f(|\sigma|)\subset C(\sigma)$ for each face $\sigma\in\Delta$. For $\sigma$ a face of a $G$-simplicial complex $\Delta$, we denote by $G_{\sigma}$ the subgroup $\{g\in G: g\sigma=\sigma\}$.

In \cite{Webb}, Th\'evenaz and Webb prove equivariant formulations of the contractible carrier lemma and the Quillen fiber lemma:

\begin{lem}\cite[Lemma 1.5(b)]{Webb}\label{webbcarry}
Let $\Delta$ be a $G$-simplicial complex such that for each face $\sigma\in\Delta$, $G_{\sigma}$ fixes the vertices of $\sigma$. Let $X$ be a $G$-space, and let $C$ be a contractible carrier from $\Delta$ to $X$ such that $C(g\sigma)=gC(\sigma)$ for all $g\in G$ and $\sigma\in\Delta$, and such that $G_{\sigma}$ acts trivially on $C(\sigma)$ for all $\sigma\in\Delta$. Then any two $G$-maps $|\Delta|\to X$ that are both carried by $C$ are $G$-homotopic.
\end{lem}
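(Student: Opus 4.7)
The plan is to construct a $G$-equivariant homotopy $H\colon |\Delta|\times I\to X$ between the two given $G$-maps $f_0,f_1$ by induction on skeleta, making all choices one $G$-orbit of faces at a time. Fix a set of representatives $\{\sigma_\alpha\}$ for the action of $G$ on the faces of $\Delta$, organized by dimension. The construction will produce an $H$ that is itself carried by $C$, i.e.\ with $H(|\sigma|\times I)\subset C(\sigma)$ for every face $\sigma$, which is strictly stronger than the conclusion of the lemma.

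For the base case in dimension $0$, for each orbit representative vertex $v$ both points $f_0(v),f_1(v)$ lie in the contractible, hence path-connected, carrier $C(\{v\})$, so pick any path inside $C(\{v\})$ joining them and take $H$ on $\{v\}\times I$ to be this path. For the inductive step at dimension $n>0$, assume $H$ has been defined $G$-equivariantly on $|\Delta^{(n-1)}|\times I$ and on $|\Delta|\times\{0,1\}$, with the carrying condition satisfied. For each orbit representative $\sigma$ of dimension $n$, the previously defined data restricts to a map $\partial(|\sigma|\times I)\cong S^n\to X$, and this map actually takes values in $C(\sigma)$: on each proper face $\tau\subset\sigma$ the inductive hypothesis places us in $C(\tau)\subset C(\sigma)$ by inclusion-preservation of the carrier, and at $t=0,1$ we land in $C(\sigma)$ by assumption on $f_0,f_1$. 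Since $C(\sigma)$ is contractible, this map extends to $|\sigma|\times I\to C(\sigma)$; fix such an extension and then propagate across the $G$-orbit by $H(gx,t)=gH(x,t)$.

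The main point, and the place where every hypothesis is used, is the verification that the propagation is well defined. Suppose $g\sigma=\sigma$, that is, $g\in G_\sigma$; we must show that the prescription $H(gx,t)=gH(x,t)$ agrees with the extension already chosen on $|\sigma|\times I$. By the hypothesis that $G_\sigma$ fixes the vertices of $\sigma$, it fixes $|\sigma|$ pointwise, so $gx=x$. By the hypothesis that $G_\sigma$ acts trivially on $C(\sigma)$, and because our extension lands in $C(\sigma)$, $gH(x,t)=H(x,t)$. Both sides thus equal $H(x,t)$, and no consistency obstruction arises. The equivariance of the carrier, $C(g\sigma)=gC(\sigma)$, ensures that the propagated $H$ on $|g\sigma|\times I$ lies in $C(g\sigma)$, so the carrying condition is preserved as we pass to higher skeleta. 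Taking the union of these extensions over all orbits of $n$-faces completes the inductive step, and the resulting $H$ is the required $G$-equivariant homotopy from $f_0$ to $f_1$. The only real obstacle in the proof, then, is pinpointing why the hypotheses on $G_\sigma$ are exactly the right ones to kill the consistency condition for simplices fixed setwise but not pointwise; the rest is a routine skeletal induction.
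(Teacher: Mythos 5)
Your proof is correct. The paper does not itself prove this lemma --- it is quoted directly from Th\'evenaz and Webb \cite[Lemma 1.5(b)]{Webb} --- so there is no in-paper argument to compare against, and your write-up supplies what the paper leaves to the citation. Your skeletal induction is the standard proof of an equivariant carrier lemma, and you place the two hypotheses on $G_\sigma$ exactly where they are needed: the vertex-fixing condition forces $|\sigma|$ to be fixed pointwise (so that $gx=x$), while triviality of the $G_\sigma$-action on $C(\sigma)$ makes the chosen extension automatically $G_\sigma$-invariant (so that $gH(x,t)=H(x,t)$), and together these kill the only possible inconsistency in the prescription $H(gx,t)=gH(x,t)$ on a setwise-but-not-pointwise-fixed simplex. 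The remaining ingredients are likewise in order: path-connectedness of the contractible $C(\{v\})$ in the base case (the same consistency check applies there too, via $G_{\{v\}}$ acting trivially on $C(\{v\})$); the boundary of $|\sigma|\times I$ landing in $C(\sigma)$ by inclusion-preservation of $C$ together with the assumption that $f_0,f_1$ are carried by $C$; the extension across $|\sigma|\times I$ existing because every map of a sphere into a contractible space is nullhomotopic; and the identity $C(g\sigma)=gC(\sigma)$ guaranteeing that the carrying condition $H(|\sigma|\times I)\subset C(\sigma)$ propagates along each orbit, which is what makes the next inductive step run.
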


\begin{lem}\cite[Theorem 1]{Webb}\label{webbquillen}
Let $P$ and $Q$ be $G$-posets, and let $f\colon P\to Q$ be a mapping of $G$-posets. Suppose that for all $q\in Q$ the fiber $f^{-1}(Q_{\geq q})$ is $G_q$-contractible, or for all $q\in Q$ the fiber $f^{-1}(Q_{\leq q})$ is $G_q$-contractible. Then $f$ induces a $G$-homotopy equivalence between the order complexes $\Delta(P)$ and $\Delta(Q)$.
\end{lem}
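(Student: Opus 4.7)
The plan is to adapt Quillen's original fiber lemma argument, tracking $G$-equivariance at every step via Lemma \ref{webbcarry}. I will treat the case in which $f^{-1}(Q_{\leq q})$ is $G_q$-contractible for every $q$; the dual case follows by applying the same argument to $P^{\mathrm{op}}$ and $Q^{\mathrm{op}}$.

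The key construction is the \emph{mapping cylinder $G$-poset} $S=\{(p,q)\in P\times Q : f(p)\leq q\}$ with componentwise order and diagonal $G$-action, together with its order-preserving projections $\pi_P\colon S\to P$ and $\pi_Q\colon S\to Q$. Since $\pi_Q=f\circ\pi_P$, it suffices to show that each projection induces a $G$-homotopy equivalence on order complexes. I would next verify the fiber contractibilities: for $p\in P$, every $(p',q')\in\pi_P^{-1}(P_{\leq p})$ satisfies $(p',q')\geq (p',f(p'))$, giving a $G_p$-equivariant deformation retract of $\pi_P^{-1}(P_{\leq p})$ onto the diagonal copy $\{(p',f(p')):p'\leq p\}\cong P_{\leq p}$, which is $G_p$-contractible via the cone on its maximum $p$. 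Symmetrically, for $q\in Q$ the relation $(p',q')\leq (p',q)$ retracts $\pi_Q^{-1}(Q_{\leq q})$ $G_q$-equivariantly onto $\{(p',q):f(p')\leq q\}\cong f^{-1}(Q_{\leq q})$, which is $G_q$-contractible by hypothesis.

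For $\pi_P$ the section $s\colon P\to S$, $p\mapsto(p,f(p))$, is canonical and $G$-equivariant with $\pi_P\circ s=\mathrm{id}_P$; both $s\circ\pi_P$ and $\mathrm{id}_{\Delta(S)}$ are then carried by the contractible carrier $\tau\mapsto |\Delta(\pi_P^{-1}(P_{\leq \max\pi_P(\tau)}))|$, so Lemma \ref{webbcarry} yields $s\circ\pi_P\simeq_G \mathrm{id}_{\Delta(S)}$. For $\pi_Q$ no canonical section exists, and the heart of the proof is the inductive construction of a $G$-equivariant section $t\colon\Delta(Q)\to\Delta(S)$ by extension over $G$-orbits of faces, using the $G_\sigma$-contractibility of the carrier values $|\Delta(\pi_Q^{-1}(Q_{\leq \max\sigma}))|$ at each step to make coherent equivariant choices; once $t$ is built, the same carrier argument shows $\pi_Q$ is a $G$-homotopy equivalence, and composing gives the result for $f$.

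This skeletal construction of $t$ is the main obstacle: Lemma \ref{webbcarry} supplies uniqueness (up to $G$-homotopy) of carrier-carried maps but not their existence, so a separate equivariant obstruction-theoretic extension argument is needed. The assumption of $G_q$-contractibility — strictly stronger than mere contractibility of $f^{-1}(Q_{\leq q})$ — is precisely what guarantees that this inductive extension problem can be solved $G$-equivariantly over each new orbit of faces.
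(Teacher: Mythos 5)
The paper does not prove this lemma; it is invoked verbatim as Theorem~1 of Th\'evenaz--Webb \cite{Webb}, so there is no internal argument to compare against, and your attempt must be assessed on its own. It has two problems.

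First, the identity $\pi_Q=f\circ\pi_P$ is false: for $(p,q)\in S$ one has $\pi_Q(p,q)=q$ and $(f\circ\pi_P)(p,q)=f(p)$, and the defining condition of $S$ gives only $f(p)\leq q$. What you want instead is $\pi_Q\circ s=f$ for the diagonal section $s(p)=(p,f(p))$, combined with $\pi_P\circ s=\mathrm{id}_P$ (so that $s$ becomes a $G$-homotopy equivalence once $\pi_P$ is), or the observation that $f\circ\pi_P\leq\pi_Q$ pointwise and hence these two maps are $G$-homotopic on order complexes. Either fix repairs the sentence, but as written it is wrong.

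Second and more seriously, the argument is not complete. You explicitly flag the $G$-equivariant construction of a homotopy section $t$ of $\pi_Q$ as ``the main obstacle,'' deferred to ``a separate equivariant obstruction-theoretic extension argument'' which you do not supply. That extension --- inductively defining $t$ over a fresh $G$-orbit of faces by choosing a representative $\sigma$, extending over $|\sigma|$ into the $G_\sigma$-fixed locus of $\big|\Delta\big(\pi_Q^{-1}(Q_{\leq\max\sigma})\big)\big|$ (which is contractible precisely because that fiber is $G_\sigma$-contractible rather than merely contractible), and then propagating over the whole orbit by equivariance --- is not bookkeeping; it is the entire content of the theorem. Lemma~\ref{webbcarry} gives uniqueness of carrier-carried $G$-maps up to $G$-homotopy but not their existence, exactly as you observe, and until the existence induction is actually carried out what you have written is a correct explanation of why the hypotheses are the right ones, not a proof. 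Since that construction is what Th\'evenaz and Webb's original argument supplies, one should either cite them, as the paper does, or write the induction out in full.
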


In this section, we slightly generalize the former in Lemma \ref{gcarry}, and use the latter to prove an equivariant formulation of the nerve lemma in Lemma \ref{gnerve}. We note that the result of Lemma \ref{gcarry} also appears in \cite[Satz 1.5]{Welker}, and that a stronger formulation of Lemma \ref{gnerve} appears in \cite{Yang}. The formulation of the equivariant nerve lemma that we give in Lemma \ref{gnerve} serves our purposes. Finally, we prove in Proposition \ref{conecontract} that any $G$-simplicial complex with a cone point is $G$-contractible.

Our slightly more general equivariant contractible carrier lemma is the following:

\begin{lem}\label{gcarry}
If $C$ is a contractible carrier from a $G$-simplicial complex $\Delta$ to a $G$-space $X$ such that $C(g\sigma)=gC(\sigma)$ for each $g\in G$ and each face $\sigma\in\Delta$, and such that each $C(\sigma)$ is $G_{\sigma}$-contractible, then any two $G$-maps carried by $C$ are $G$-homotopic. 
\end{lem}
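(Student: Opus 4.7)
The plan is to reduce to the Th\'evenaz--Webb version (Lemma~\ref{webbcarry}) by passing to the barycentric subdivision $\Sd(\Delta)$ and replacing $C$ by a new carrier valued in fixed-point sets. The key observation about $\Sd(\Delta)$ is that for a simplex $\tau=\{\sigma_0\subsetneq\sigma_1\subsetneq\cdots\subsetneq\sigma_m\}$, any $g\in G_\tau$ must permute the $\sigma_i$ preserving inclusion; since their cardinalities are strictly increasing, $g$ fixes each $\sigma_i$. Consequently $G_\tau\subseteq G_{\sigma_m}$, the group $G_\tau$ fixes every vertex of $\tau$, and $G_\tau$ acts trivially on the geometric simplex $|\tau|$.

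On $\Sd(\Delta)$ I would define a carrier by $\widetilde{C}(\tau)=C(\sigma_m)^{G_\tau}$. This is inclusion-preserving, since $\tau\subseteq\tau'$ forces both $\sigma_m\subseteq\sigma'_{m'}$ and $G_{\tau'}\subseteq G_\tau$; and it is $G$-equivariant, since $\widetilde{C}(g\tau)=C(g\sigma_m)^{gG_\tau g^{-1}}=g\widetilde{C}(\tau)$. Each $\widetilde{C}(\tau)$ is contractible because $C(\sigma_m)$ is $G_{\sigma_m}$-contractible and hence $G_\tau$-contractible, so a $G_\tau$-equivariant null-homotopy of $C(\sigma_m)$ restricts to an ordinary null-homotopy of its $G_\tau$-fixed set. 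By construction, $G_\tau$ acts trivially on $\widetilde{C}(\tau)$, so $\widetilde{C}$ satisfies all the hypotheses of Lemma~\ref{webbcarry}.

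Given two $G$-maps $f_1,f_2\colon|\Delta|\to X$ carried by $C$, the next step is to check that under the homeomorphism $|\Delta|\cong|\Sd(\Delta)|$ they are also carried by $\widetilde{C}$. Indeed, for $x\in|\tau|$ and $g\in G_\tau$ we have $gx=x$, so $f_i(x)=gf_i(x)$ lies in $C(\sigma_m)^{G_\tau}=\widetilde{C}(\tau)$. Applying Lemma~\ref{webbcarry} then produces a $G$-homotopy $F$ between $f_1$ and $f_2$ carried by $\widetilde{C}$. To see that $F$ is carried by $C$ on $|\Delta|$ itself, I would note that each simplex $|\sigma|\subseteq|\Delta|$ decomposes in $|\Sd(\Delta)|$ as a union of $|\tau|$ with $\sigma_m\subseteq\sigma$, and $\widetilde{C}(\tau)\subseteq C(\sigma_m)\subseteq C(\sigma)$ for every such $\tau$.

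The main obstacle is showing $\widetilde{C}(\tau)$ is contractible from the hypothesis of $G_\sigma$-contractibility alone. This requires interpreting ``$G_\sigma$-contractible'' in a sense strong enough to supply a $G_\sigma$-equivariant null-homotopy to a $G_\sigma$-fixed point, from which the null-homotopy of any smaller subgroup's fixed set follows by restriction. If one instead used a weaker notion, an extra step (e.g.\ first producing a $G_\sigma$-equivariant deformation retraction onto a $G_\sigma$-fixed point) would be needed before the rest of the argument goes through.
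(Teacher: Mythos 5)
Your proposal is correct and follows essentially the same route as the paper's proof: pass to $\Sd(\Delta)$, define the modified carrier on a chain $\tau=\{\sigma_1\subset\cdots\subset\sigma_m\}$ as the $G_\tau$-fixed subset of $C(\sigma_m)$, verify it satisfies the hypotheses of Lemma~\ref{webbcarry}, and check that $G$-maps carried by $C$ are also carried by the new carrier. The concern you raise in your final paragraph is resolved by the standard meaning of ``$G_\sigma$-contractible'' (a $G_\sigma$-equivariant contraction necessarily lands on a $G_\sigma$-fixed point), which is exactly what the paper uses when it says the restriction of the $G_{\sigma_m}$-contraction contracts $C'(\tau)$.
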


\begin{proof}
We define a carrier from the barycentric subdivision $\Sd(\Delta)$ of $\Delta$ to $X$ as follows: If $\tau=\{\sigma_1\subset\cdots\subset\sigma_m\}$ is a face of $\Sd(\Delta)$, where each $\sigma_i$ is a face of $\Delta$, let $C'(\tau)$ be the subset of points in $C(\sigma_m)$ that are fixed by each element of $G_{\tau}$. This contracts to a point by the restriction of the homotopy that $G_{\sigma_m}$-contracts $C(\sigma_m)$ to a point. Also, if $\tau\subset \tau'$ then $\sigma_m\subset\sigma'_m$, so that $C(\sigma_m)\subset C(\sigma'_m)$ and $G_{\tau'}\subset G_{\tau}$. Hence $C'(\tau)$, which is the subset of points in $C(\sigma_m)$ fixed by $G_{\tau}$, is contained in the subset of points in $C(\sigma'_m)$ fixed by $G_{\tau}$, which is contained in the subset of points in $C(\sigma'_m)$ fixed by $G_{\tau'}$. This is $C'(\tau')$, so that $C'$ is a contractible carrier. 

We have for any face $\tau\in \Sd(\Delta)$ that $G_\tau$ fixes the vertices of $\tau$ (because it sends each face of $\Delta$ to a face of the same dimension) and that $C'(g\tau)=gC'(\tau)$ and that $G_{\tau}$ acts trivially on $C'(\tau)$, so we may apply Lemma \ref{webbcarry}: Any two $G$-maps $|\Sd(\Delta)|\to X$ that are carried by $C'$ are $G$-homotopic. But any $G$-map $|\Delta|\cong |\Sd(\Delta)|\to X$ that is carried by $C$ is also carried by $C'$: A face of $\Sd(\Delta)$, call it again $\tau=\{\sigma_1\subset\cdots\subset \sigma_m\}$, is fixed pointwise by $G_{\tau}$, and $|\tau|\subset|\sigma_m|$, so that the image of $|\tau|$ under a $G$-map carried by $C$ is contained in $C(\sigma_m)$, so that the image of $|\tau|$ under a $G$-map carried by $C$ is contained in $C'(\tau)$, so that any $G$-map carried by $C$ is also carried by $C'$, as desired.
\end{proof}

To formulate an equivariant nerve lemma, we need a condition on a cover of a $G$-simplicial complex which will allow the action of $G$ on the entire simplicial complex to induce an action on the nerve of the covering.

\begin{defn}
For a $G$-simplicial complex $\Delta$, we say that a covering $\{\Delta_i\}_{i\in I}$ of $\Delta$ by subcomplexes is \emph{$G$-invariant} if for all $i\in I$ and for all $g\in G$ there exists a unique $j\in I$ such that $g\Delta_i=\Delta_j$. 
\end{defn}

Given such a cover of a $G$-simplicial complex, we see that the action of $G$ on $\Delta$ induces an action on the index set $I$, defined by letting $gi=j$ when $g\Delta_i=\Delta_j$. We claim that this action of $G$ sends faces to faces in the nerve $\mathcal{N}(\{\Delta_i\})$. If $\sigma$ is a face of $\mathcal{N}(\{\Delta_i\})$, then the intersection $\bigcap_{i\in\sigma}\Delta_i$ is nonempty. Hence 
$$g\bigcap_{i\in\sigma}\Delta_i=\bigcap_{i\in\sigma}g\Delta_i=\bigcap_{j\in g\sigma}\Delta_j$$
is also nonempty, so that $g\sigma$ is a face of $\mathcal{N}(\{\Delta_i\})$. Thus the action of $G$ on $\Delta$ induces a simplicial action on the nerve $\mathcal{N}(\{\Delta_i\})$. This induced action allows us to formulate an equivariant nerve lemma as follows:

\begin{lem}\label{gnerve}
Let $\Delta$ be a $G$-simplicial complex and let $\{\Delta_i\}_{i\in I}$ be a $G$-invariant covering of $\Delta$. If every nonempty finite intersection $\bigcap_{i\in\sigma}\Delta_i$, where $\sigma\subset I$, is $G_{\sigma}$-contractible, then $\Delta$ and the nerve $\mathcal{N}(\{\Delta_i\})$ are $G$-homotopy equivalent.
\end{lem}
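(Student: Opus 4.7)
My plan is to follow the standard nerve-lemma strategy, introducing an intermediate ``bridging'' $G$-poset equipped with two canonical projections, and applying the equivariant Quillen fiber lemma (Lemma~\ref{webbquillen}) to each. Let $P$ denote the poset of pairs $(\sigma,J)$ in which $\sigma\in\Delta$ is a face and $\varnothing\neq J\subseteq I$ is a finite subset satisfying $\sigma\in\bigcap_{i\in J}\Delta_i$, ordered by $(\sigma,J)\leq(\sigma',J')$ if and only if $\sigma\subseteq\sigma'$ and $J\supseteq J'$. The action of $G$ on $I$ induced by $G$-invariance of the cover makes $P$ a $G$-poset under $g\cdot(\sigma,J)=(g\sigma,gJ)$, and both coordinate projections $p_1\colon P\to F(\Delta)$ and $p_2\colon P\to F(\mathcal{N}(\{\Delta_i\}))^{\mathrm{op}}$ are order-preserving $G$-maps.

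To verify the Quillen hypothesis for $p_1$, set $J(\sigma):=\{i\in I:\sigma\in\Delta_i\}$. This is nonempty because $\{\Delta_i\}$ covers $\Delta$, and it is $G_\sigma$-stable because, for $g\in G_\sigma$, $gi\in J(\sigma)$ iff $\sigma=g\sigma\in g\Delta_i=\Delta_{gi}$. For any $(\sigma',J')\in p_1^{-1}(F(\Delta)_{\geq\sigma})$ we have $\sigma\subseteq\sigma'$ and $J'\subseteq J(\sigma')\subseteq J(\sigma)$ (the latter inclusion because each $\Delta_i$ is a subcomplex, so $\sigma'\in\Delta_i$ forces $\sigma\in\Delta_i$). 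Therefore $(\sigma,J(\sigma))$ is a $G_\sigma$-fixed minimum of the fiber, so the order complex of the fiber is a $G_\sigma$-cone on the corresponding vertex, and in particular is $G_\sigma$-contractible.

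For $p_2$, the fiber over $J$ is $p_2^{-1}(F(\mathcal{N})^{\mathrm{op}}_{\leq J})=\{(\sigma,J')\in P:J'\supseteq J\}$. Define $c(\sigma,J'):=(\sigma,J)$; this lands in $P$ because $\bigcap_{i\in J'}\Delta_i\subseteq\bigcap_{i\in J}\Delta_i$, is order-preserving, is $G_J$-equivariant, and satisfies $(\sigma,J')\leq c(\sigma,J')$ in $P$, so it is an increasing closure operator. The image subposet $\{(\sigma,J):\sigma\in\bigcap_{i\in J}\Delta_i\}$ is isomorphic as a poset to $F\!\left(\bigcap_{i\in J}\Delta_i\right)$, whose order complex is $\Sd\!\left(\bigcap_{i\in J}\Delta_i\right)$, canonically $G_J$-homeomorphic to the $G_J$-contractible complex $\bigcap_{i\in J}\Delta_i$. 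The standard closure-operator argument then shows that the fiber's order complex $G_J$-deformation retracts onto a $G_J$-contractible subcomplex, hence is itself $G_J$-contractible.

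Applying Lemma~\ref{webbquillen} to $p_1$ and to $p_2$ in turn yields $G$-homotopy equivalences $\Delta(P)\simeq\Sd(\Delta)\cong\Delta$ and $\Delta(P)\simeq\Sd(\mathcal{N}(\{\Delta_i\}))\cong\mathcal{N}(\{\Delta_i\})$, and composing produces the desired $G$-homotopy equivalence $\Delta\simeq\mathcal{N}(\{\Delta_i\})$. The main technical hurdle is really just the equivariance bookkeeping: checking that the minimum $(\sigma,J(\sigma))$ is genuinely $G_\sigma$-fixed, that the closure $c$ commutes with $G_J$, and that both projections are $G$-equivariant poset maps. Each of these reduces cleanly to the compatibility between the $G$-action on $\Delta$ and the induced action on $I$ that is built into the definition of a $G$-invariant covering.
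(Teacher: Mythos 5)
Your proof is correct, but it takes a genuinely different route than the paper's. The paper defines a \emph{single} poset map $f\colon F(\Delta)\to F(\mathcal{N}(\{\Delta_i\}))$ by $\pi\mapsto\{i\in I:\pi\in\Delta_i\}$, checks it is $G$-equivariant, computes $f^{-1}\bigl(F(\mathcal{N})_{\geq\sigma}\bigr)=\bigcap_{i\in\sigma}\Delta_i$, and applies Lemma~\ref{webbquillen} once to conclude $\Sd(\Delta)\simeq_G\Sd(\mathcal{N})$. (The paper calls $f$ order-preserving; it is in fact order-\emph{reversing}, so properly one should view it as a map to $F(\mathcal{N})^{\mathrm{op}}$, which does not affect the order complexes --- your version handles this point more carefully.) You instead introduce the intermediate ``pair'' poset $P$ of compatible $(\sigma,J)$ with its two projections, and apply Lemma~\ref{webbquillen} to each --- once trivially (fixed minimum in the fiber over $\sigma$), and once via a $G_J$-equivariant closure operator that retracts the fiber onto a copy of $F\bigl(\bigcap_{i\in J}\Delta_i\bigr)$, which is where the contractibility hypothesis enters. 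The paper's one-map argument is shorter and essentially absorbs your closure-operator step into the fiber computation: its fiber $f^{-1}(F(\mathcal{N})_{\geq\sigma})$ is literally $\bigcap_{i\in\sigma}\Delta_i$, with no retraction needed. Your two-projection approach costs more bookkeeping but is more modular, makes the orientation/opposite-poset issues explicit, and is the standard template one would reuse in settings where the direct map does not compute so cleanly. Both are valid equivariant upgrades of the classical nerve lemma.
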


\begin{proof} 
Let $Q=F(\Delta)$ and $P=F(\mathcal{N}(\{\Delta_i\}))$ be the face posets of $\Delta$ and the nerve $\mathcal{N}(\{\Delta_i\})$, respectively. Define a map $f\colon Q\to P$ by $\pi\mapsto\{i \in I : \pi \in \Delta_i\}$. This is order preserving and it is also $G$-equivariant: For any $\pi\in Q$ and $g\in G$ we have
\begin{align*}
gf(\pi)&= g\{i \in I : \pi \in \Delta_i\}\\
&=\{j \in I : g\Delta_i = \Delta_j \text{ for some $\Delta_i$ containing $\pi$}\}\\
&=\{j \in I : g\pi \in \Delta_j\}\\
&=f(g\pi).
\end{align*}
In addition, for any $\sigma\in P$, the fibers $f^{-1}(P_{\geq\sigma}) = \bigcap_{i\in \sigma}\Delta_i$ are $G_{\sigma}$-contractible by hypothesis. The result now follows from an application of Lemma \ref{webbquillen}.
\end{proof}

Finally, we have the following useful condition for $G$-contractibility of a $G$-simplicial complex:

\begin{prop}\label{conecontract}
A $G$-simplicial complex $\Delta$ with a cone point $v\in\Delta$ is $G$-contractible.
\end{prop}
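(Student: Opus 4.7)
The plan is to apply the equivariant contractible carrier lemma (Lemma \ref{gcarry}) with target $X = |\Delta|$, exhibiting a common $G$-equivariant contractible carrier for both the identity $\mathrm{id}_{|\Delta|}$ and a constant $G$-map onto a canonically chosen $G$-fixed point of $|\Delta|$. The delicate point is that the cone point $v$ need not be $G$-fixed---the group may permute it with other cone points---so the naive straight-line contraction to $v$ fails to be $G$-equivariant and must be replaced by a contraction to some $G$-invariantly constructed point.

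First I would observe that the set of cone points is $G$-invariant: if $v$ is a cone point and $g \in G$, then for any face $\sigma$, $\sigma \cup \{gv\} = g(g^{-1}\sigma \cup \{v\})$ is a face, so $gv$ is also a cone point. Moreover, any finite set $\{v_1, \ldots, v_k\}$ of cone points spans a face: inductively, if $\{v_1, \ldots, v_i\} \in \Delta$, then unioning with the cone point $v_{i+1}$ again yields a face. Consequently (taking $G$ to be finite, as in our applications where $|G| = 4$), the orbit $\tau := Gv$ is a $G$-invariant face of $\Delta$ consisting entirely of cone points, and the barycenter $b$ of the simplex $|\tau|$ is a $G$-fixed point of $|\Delta|$.

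Next I would define a carrier $C$ from $\Delta$ to $|\Delta|$ by $C(\sigma) := |\sigma \cup \tau|$. Since $\tau$ consists of cone points, $\sigma \cup \tau$ is a face, so $C(\sigma)$ is a closed geometric simplex and hence contractible; inclusion-preservation is immediate, and equivariance $C(g\sigma) = gC(\sigma)$ follows at once from $g\tau = \tau$. For the $G_\sigma$-contractibility hypothesis, I would note that $G_\sigma$ stabilizes $\sigma \cup \tau$ setwise and therefore acts on the simplex $|\sigma \cup \tau|$ by affine permutations of its vertices; the straight-line homotopy toward the barycenter of this simplex is then a $G_\sigma$-equivariant contraction of $C(\sigma)$ to a point.

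Finally, $\mathrm{id}_{|\Delta|}$ is carried by $C$ because $|\sigma| \subset |\sigma \cup \tau| = C(\sigma)$, and the constant map to $b$ is a $G$-map (as $b$ is $G$-fixed) also carried by $C$ because $b \in |\tau| \subset C(\sigma)$. Lemma \ref{gcarry} then provides a $G$-homotopy between the two, exhibiting $|\Delta|$ as $G$-contractible. The main obstacle is conceptual rather than computational: finding the $G$-fixed target point $b$. Once the $G$-invariant cone-point face $\tau$ is identified and its barycenter taken, the remaining verifications of the carrier hypotheses reduce to routine checks on affine simplices.
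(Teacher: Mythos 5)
Your proof is correct, but it takes a more machinery-heavy route than the paper. The paper's argument is direct: it observes (just as you do) that the orbit $Gv$ is a cone face of $\Delta$, and then simply notes that the straight-line homotopy contracting $|\Delta|$ to the barycenter of $|Gv|$ is itself $G$-equivariant, since $G$ permutes the vertices of $Gv$ and so fixes its barycenter. That one-step contraction is the whole proof. You instead route through the equivariant contractible carrier lemma (Lemma~\ref{gcarry}), introducing the carrier $C(\sigma) = |\sigma \cup Gv|$ to compare $\mathrm{id}_{|\Delta|}$ with the constant map to the barycenter. Your carrier is in effect a bookkeeping device that records the simplex within which the paper's straight-line homotopy from $|\sigma|$ to $b$ stays, so the two arguments encode the same geometric idea; the direct version is shorter, while yours spells out more explicitly why the homotopy is well-defined simplex by simplex. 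One thing you do that the paper glosses over is verifying in detail that $Gv$ is actually a face (cone points are permuted by $G$, and any finite set of cone points is a face by induction) --- the paper simply asserts that $Gv$ is a cone face. You also flag the finiteness of $G$ (needed so that $Gv$ is a finite set and hence can be a face), an implicit hypothesis the paper does not mention. Both proofs are valid; yours is the belt-and-suspenders version.
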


\begin{proof}
As $v$ is a cone point of $\Delta$, we must have that the $G$-orbit of $v$, $Gv$, is a cone face of $\Delta$. The straight-line homotopy to the barycenter of $Gv$ respects the action of $G$, so that $\Delta$ is $G$-contractible. 
\end{proof}

\section{Proof of Theorem \ref{ws}}

In this section, we assume that $n\geq4$.

Recall the action of the group $G=\left<\alpha, w_0\right>$ on $\hat{\Delta}_{ws}(n)$ as defined in \textsection1. To prove Theorem \ref{ws}, we claim that the family $\{\Delta_i\}_{i\in I}$ composed of the following subcomplexes of $\hat{\Delta}_{ws}(n)$:
$$\dl(2),\; \dl([n]\smallsetminus 2),\; \dl(3),\; \dl([n]\smallsetminus 3),\; \dots,\; \dl(n-1),\; \dl([n]\smallsetminus(n-1))$$
is a $G$-invariant covering to which we may apply the equivariant nerve lemma (Lemma \ref{gnerve}). We show this with two lemmas:

\begin{lem}
The family of subcomplexes $\{\Delta_i\}_{i\in I}$ is a $G$-invariant covering of $\hat{\Delta}_{ws}(n)$ with nerve $\mathcal{N}(\{\Delta_i\})$ a simplex.
\end{lem}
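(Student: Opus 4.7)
The plan is to verify three things in turn: the family is $G$-invariant, it covers $\hat{\Delta}_{ws}(n)$, and the nerve $\mathcal{N}(\{\Delta_i\})$ is the full simplex on $I$.

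For $G$-invariance, I would use the identity $g\dl(v) = \dl(gv)$, valid for any simplicial automorphism $g$ and vertex $v$. Since $\alpha$ swaps each vertex $\{k\}$ with $[n] \smallsetminus \{k\}$ and $w_0$ sends $\{k\} \mapsto \{n+1-k\}$ (and likewise permutes the co-singletons), both generators of $G$ permute the designated vertices among themselves, so the family is closed under the $G$-action.

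For the covering claim, the key step is to show that $\{k\}$ and $[n] \smallsetminus \{k\}$ are not weakly separated for any $k \in \{2, \dots, n-1\}$. The only partitions of $\{k\}$ into $A_1 \sqcup A_2$ are $(\{k\}, \varnothing)$ and $(\varnothing, \{k\})$; the first fails the surround condition $A_1 \prec [n] \smallsetminus \{k\} \prec A_2$ because $1 \in [n] \smallsetminus \{k\}$ and $k > 1$, while the second fails because $n \in [n] \smallsetminus \{k\}$ and $k < n$. Hence no face of $\hat{\Delta}_{ws}(n)$ can contain both vertices, so no face contains all $2(n-2)$ designated vertices. Therefore every face omits at least one designated vertex and lies in the corresponding deletion, which is what it means for the family to cover $\hat{\Delta}_{ws}(n)$.

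Finally, to see the nerve is the full simplex on $I$, I would exhibit a vertex of $\hat{\Delta}_{ws}(n)$ that lies in the total intersection $\bigcap_{i \in I} \Delta_i$: for $n \geq 4$ the subset $\{1, 3\}$ is non-frozen and is neither a singleton $\{k\}$ nor a co-singleton $[n] \smallsetminus \{k\}$, so it avoids every designated vertex. The total intersection is therefore nonempty, hence every sub-intersection is nonempty as well, so every subset of $I$ indexes a face of $\mathcal{N}(\{\Delta_i\})$. The only real piece of substance is the weak-separation check in the covering step, and even that reduces to enumerating the two partitions of a singleton, so no serious obstacle is expected.
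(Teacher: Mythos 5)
Your proof is correct and follows essentially the same approach as the paper: verify $G$-invariance directly, show that no face contains a complementary pair $\{k\}$ and $[n]\smallsetminus\{k\}$ to get the covering, and exhibit an element of the total intersection to see that the nerve is a full simplex. The only cosmetic difference is your choice of witness in the last step --- the single vertex $\{1,3\}$ rather than the paper's star $\st(\{1n,\,23\cdots(n-1)\})$ --- which is slightly more economical but has the same effect.
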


\begin{proof}
Any face of $\hat{\Delta}_{ws}(n)$ either does not contain some singleton $k\in\{2,3,\dots,n-1\}$, in which case it lies in $\dl(k)$, or it does, in which case it cannot also contain its complement $[n]\smallsetminus k$ and therefore lies in $\dl([n]\smallsetminus k)$. Hence these subcomplexes cover $\hat{\Delta}_{ws}(n)$. It is also clear that this covering is $G$-invariant. 

To see that $\mathcal{N}(\{\Delta_i\})$ is a simplex, notice that for any subset $\sigma\subset I$ the intersection $\bigcap_{i\in\sigma}\Delta_i$ contains the subcomplex $\st(\{1n, 23\cdots n-1\})\subset\hat{\Delta}_{ws}(n)$ (as this subcomplex consists of exactly those faces of $\hat{\Delta}_{ws}(n)$ which contain no singletons and no complements of singletons) and is thus nonempty. 
\end{proof}

Given Lemma 3.1, in order to apply Lemma 2.5 to the covering $\{\Delta_i\}_{i\in I}$ to conclude that $\hat{\Delta}_{ws}(n)$ is $G$-contractible, it remains to verify the following:

\begin{lem}
For every subset $\sigma\subset I$, the intersection $\bigcap_{i\in\sigma}\Delta_i$ is $G_{\sigma}$-contractible.
\end{lem}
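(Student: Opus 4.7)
The plan is to show that $\bigcap_{i\in\sigma}\Delta_i$ is $G_\sigma$-homotopy equivalent to the subcomplex $\Sigma := \st_{\hat{\Delta}_{ws}(n)}(\{1n, 23\cdots n-1\})$, which is contained in every such intersection (as already observed in the proof of Lemma 3.1) and is $G$-contractible via the $G$-fixed cone point $1n$ by Proposition \ref{conecontract}. This would immediately yield the $G_\sigma$-contractibility of $\bigcap_{i\in\sigma}\Delta_i$.

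First I would verify that $\Sigma$ equals the induced subcomplex of $\hat{\Delta}_{ws}(n)$ on the vertices weakly separated from both $1n$ and $23\cdots n-1$, and that an elementary case analysis of the weak separation condition identifies these as the non-frozen subsets which are neither singletons $\{k\}$ nor cosingletons $[n]\smallsetminus k$ for $2\leq k\leq n-1$---equivalently, the vertices that never appear as a deleted $v_i$. To establish the $G_\sigma$-homotopy equivalence, I would apply the equivariant Quillen fiber lemma (Lemma \ref{webbquillen}) to the order-preserving, $G_\sigma$-equivariant poset map $f\colon F(\bigcap_{i\in\sigma}\Delta_i) \to F(\Sigma)$ given by $f(\tau) = \tau \cap V(\Sigma)$: for any $\sigma' \in F(\Sigma)$, the fiber $f^{-1}(F(\Sigma)_{\geq \sigma'}) = \{\tau : \tau \supseteq \sigma'\}$ is a cone with apex $\sigma'$ in the face poset, so its order complex is $G_{\sigma'}$-contractible by Proposition \ref{conecontract} applied to the orbit $G_{\sigma'}\cdot v$ for any vertex $v\in\sigma'$.

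The hard part is that the map $f$ is only naturally defined on faces $\tau$ with $\tau \cap V(\Sigma) \neq \varnothing$, so faces of $\bigcap_{i\in\sigma}\Delta_i$ consisting entirely of surviving singletons and cosingletons require separate treatment. The key enabling observation is that any such $\tau$ can be augmented by a canonical vertex of $\Sigma$: $23\cdots n-1$ is weakly separated from every surviving singleton (since $\{k\}\subset\{2,\ldots,n-1\}$), $1n$ is weakly separated from every surviving cosingleton (since $\{1,n\}\subset[n]\smallsetminus k$), and two-element sets such as $\{1,k\}$ or $\{k,n\}$ handle mixed faces; this extensibility lets one prolong $f$ to the full face poset in an order-preserving, $G_\sigma$-equivariant way by sending each bad $\tau$ to the $V(\Sigma)$-part of a $\tau$-dependent canonical extension, thereby completing the Quillen fiber argument.
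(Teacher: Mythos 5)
The conceptual direction you propose --- showing that $\bigcap_{i\in\sigma}\Delta_i$ is $G_\sigma$-homotopy equivalent to the cone subcomplex $\Sigma=\st(\{1n,23\cdots n-1\})$ --- is sensible, and your identification of the vertices of $\Sigma$ as those vertices of $\hat{\Delta}_{ws}(n)$ that are neither singletons nor cosingletons is correct. However, the Quillen fiber argument as you describe it has a fatal flaw in the step you flagged as ``the hard part,'' and I do not believe it can be repaired in the way you suggest.

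The problem is that no order-preserving prolongation of $f(\tau)=\tau\cap V(\Sigma)$ to the bad faces can exist. Take a single surviving singleton $k$ with $3\le k\le n-2$ (if there is one). The face $\tau=\{k\}$ of $\bigcap_{i\in\sigma}\Delta_i$ is bad. It has several distinct minimal good extensions: for instance, $\{k,\,1k\}$, $\{k,\,kn\}$, and $\{k,\,23\cdots n-1\}$ are all faces of $\bigcap_{i\in\sigma}\Delta_i$ (you verified the needed weak separations yourself), and $1k$, $kn$, $23\cdots n-1$ are three distinct vertices of $\Sigma$. Order-preservation forces $f(\{k\})\subseteq f(\{k,\,1k\})=\{1k\}$ and simultaneously $f(\{k\})\subseteq f(\{k,\,kn\})=\{kn\}$, whence $f(\{k\})\subseteq\{1k\}\cap\{kn\}=\varnothing$. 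Since the face poset (in the convention needed for $\Delta(F(\Sigma))\cong\Sd(\Sigma)$) excludes the empty face, this is a contradiction; the same argument applies mutatis mutandis to $k=2$ and $k=n-1$ and to surviving cosingletons. So the ``$V(\Sigma)$-part of a $\tau$-dependent canonical extension'' scheme cannot produce a poset map, and Lemma \ref{webbquillen} never applies. (Note also that passing to the extended face poset containing $\varnothing$ does not help: that poset has a minimum, so its order complex is a cone on the barycentric subdivision rather than the subdivision itself.) The paper instead argues by induction on the number of free complementary pairs, covering the intersection by stars and applying the equivariant nerve lemma (Lemma \ref{gnerve}) rather than the fiber lemma; the cone points you found for bad faces ($1r$, $rn$, $r_1r_2$, etc.)\ reappear there, but as cone points of intersections of stars in a nerve cover, which sidesteps the need for any globally defined poset map.
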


To prove this, we induct on the number of \emph{free complementary pairs} of an intersection $\bigcap_{i\in\sigma}\Delta_i$, meaning the number of pairs of subsets $(k, [n]\smallsetminus k)$ such that neither $\dl(k)$ nor $\dl([n]\smallsetminus k)$ is involved in $\bigcap_{i\in\sigma}\Delta_i$.

\begin{proof}
For the base case, suppose that we are given an intersection 
$$\bigcap_{i\in\sigma}\Delta_i=\big(\dl(k_1)\cap\dl(k_2)\cap\cdots\cap\dl(k_m)\big)\cap\big(\dl([n]\smallsetminus\ell_1)\cap\dl([n]\smallsetminus\ell_2)\cap\cdots\cap\dl([n]\smallsetminus\ell_n)\big)$$ 
which has no free complementary pairs. We claim that the family of subcomplexes $\{K_j\}_{j\in J}$ which is composed of the subcomplexes
$$\st(1n),\; \st(23\cdots n-1),$$
$$\{\st(r)\}_{\substack{r\in\{2,3,\dots, n-1\}\smallsetminus\{k_1, k_2, \dots, k_m\}}},$$
$$\{\st([n]\smallsetminus s)\}_{\substack{s\in\{2,3,\dots, n-1\}\smallsetminus\{\ell_1, \ell_2,\dots, \ell_n\}}}$$
is a $G_{\sigma}$-invariant cover of $\bigcap_{i\in\sigma}\Delta_i$ to which we may apply Lemma \ref{gnerve}. (Note: here, we are taking the stars within the subcomplex $\bigcap_{i\in\sigma}\Delta_i$. We may do this because each of the vertices whose stars are in this cover do indeed lie in $\bigcap_{i\in\sigma}\Delta_i$ by hypothesis.)

To see that the subcomplexes $K_j$ cover $\bigcap_{i\in\sigma}\Delta_i$, note that any face of $\bigcap_{i\in\sigma}\Delta_i$ which is not in $\st(1n)$ must contain some singleton $r\not\in\{k_1, k_2, \dots, k_m\}$, and therefore lies in $\st(r)$, which will be in the cover. Moreover, the cover $\{K_j\}_{j\in J}$ is $G_{\sigma}$-invariant as each element of the subgroup $G_{\sigma}$ of $G$ either fixes or interchanges $1n$ and $23\cdots n-1$, and thus either preserves or interchanges their stars. Each element of $G_{\sigma}$ sends singletons and complements of singletons that are not contained in $\bigcap_{i\in\sigma} \Delta_i$ to one another, and thus sends elements of 
$$\big(\{2,\dots,n-1\}\smallsetminus\{k_1,\dots,k_m\}\big)\cup\big(\{[n]\smallsetminus2,\dots,[n]\smallsetminus(n-1)\}\smallsetminus\{[n]\smallsetminus \ell_1,\dots,[n]\smallsetminus\ell_n\}\big)$$
to one another. Hence each element of $G_{\sigma}$ sends their stars to one another as well. 

Finally, we show that for every subset $\tau\subset J$, the intersection $\bigcap_{j\in\tau}K_j$ is $(G_{\sigma})_{\tau}$-contractible. We separate this into a few simple cases (that may overlap). In each case, we exhibit a cone point for the intersection which, together with the fact that $\bigcap_{j\in\tau}K_j$ will always be a $(G_{\sigma})_{\tau}$-simplicial complex, allows us to apply Proposition \ref{conecontract} to show $(G_{\sigma})_{\tau}$-contractibility.

\begin{itemize}

\item\textbf{Case 1:} \emph{For no $j\in\tau$ is $K_j=\st(1n)$ or $K_j=\st(23\cdots n-1)$.}

\noindent Because $\bigcap_{i\in\sigma}\Delta_i$ has no free complementary pairs, for any singleton $r$ whose star is involved in $\bigcap_{j\in\tau}K_j$ we know that no $K_j=\st([n]\smallsetminus r)$. Therefore $\bigcap_{j\in\tau}K_j$ is an intersection of stars of subsets which are all pairwise weakly separated, and hence $\bigcap_{j\in\tau}K_j$ is the star of a face of $\bigcap_{i\in\sigma}\Delta_i$, which has a cone point.

\smallskip

\item\textbf{Case 2:} \emph{For some $j\in\tau$ we have $K_j=\st(1n)$.}

\smallskip

\begin{itemize}

\item\textbf{Subcase 2.1:} \emph{For at least two $j\in\tau$ we have that $K_j$ is the star of a singleton.}

Order the singletons whose stars are involved in $\bigcap_{j\in\tau}K_j$ so that $r_1$ and $r_2$ are the two least such singletons. We claim that the vertex $r_1r_2$ is a cone point of $\bigcap_{j\in\tau} K_j$. Because the subset $r_1r_2$ is weakly separated from $1n$ and $23\cdots n-1$, it lies in both of their stars. It is also weakly separated from every singleton whose star is involved in the intersection since $r_1$ and $r_2$ are the least two such singletons. As there are no free complementary pairs in the intersection $\bigcap_{i\in\sigma}\Delta_i$, the stars of neither $[n]\smallsetminus r_1$ nor $[n]\smallsetminus r_2$ can be involved in the intersection $\bigcap_{j\in\tau}K_j$, so that $r_1r_2$ is also weakly separated from each complement of a singleton whose star is involved in the intersection. Thus the vertex $r_1r_2$ lies in each $K_j$, and therefore lies in $\bigcap_{j\in\tau}K_j$. Finally, because any face of $\bigcap_{j\in\tau}K_j$ consists of subsets which are weakly separated from both $r_1$ and $r_2$ by hypothesis and also contains no singletons (because no singleton is weakly separated from $1n$), each subset in a face of $\bigcap_{j\in\tau}K_j$ is weakly separated from the subset $r_1r_2$. Therefore $r_1r_2$ is a cone point, as desired.

\smallskip

\item\textbf{Subcase 2.2:} \emph{For exactly one $j\in\tau$ we have that $K_j$ is the star of a singleton.}

Let $r$ be the singleton in question. At least one of the subsets $1r$ or $rn$ is not frozen, without loss of generality let it be $1r$. The subset $1r$ is weakly separated from both $1n$ and $23\cdots n-1$, as well as from the singleton $r$ and the complement of any singleton that is not equal to $r$, and thus lies in their stars. Because there are no free complementary pairs in the intersection $\bigcap_{i\in\sigma}\Delta_i$, the star $\st([n]\smallsetminus r)$ cannot also be involved in the intersection $\bigcap_{j\in\tau}K_j$, and so the vertex $1r$ lies in the intersection $\bigcap_{j\in\tau} K_j$. Finally, because any face of $\bigcap_{j\in\tau}K_j$ consists of subsets which are weakly separated from $r$ and also contains no singletons, each subset in a face of $\bigcap_{j\in\tau}K_j$ is weakly separated from the subset $1r$. Therefore $1r$ is a cone point, as desired.

\smallskip

\item\textbf{Subcase 2.3:} \emph{For no $j\in\tau$ is $K_j$ the star of a singleton.}

Because the subset $1n$ is weakly separated from $23\cdots n-1$ as well as every complement of a singleton, $\bigcap_{i\in\sigma}K_j$ is the star of a face of $\bigcap_{i\in\sigma}\Delta_i$, which has a cone point.

\end{itemize}

\smallskip

\item\textbf{Case 3:} \emph{For some $j\in\tau$ we have $K_j=\st(23\cdots n-1)$.}

\smallskip

\begin{itemize}

\item\textbf{Subcase 3.1:} \emph{For at least two $j\in\tau$ we have that $K_j$ is the star of a complement of a singleton.}

An argument symmetric to subcase 2.1 above shows that $\bigcap_{j\in\tau}K_j$ has a cone point $[n]\smallsetminus s_1s_2$, where $s_1$ and $s_2$ are the two least singletons such that $[n]\smallsetminus s_1$ and $[n]\smallsetminus s_2$ are involved in $\bigcap_{j\in\tau}K_j$.

\smallskip

\item\textbf{Subcase 3.2:} \emph{For exactly one $j\in\tau$ we have that $K_j$ is the star of a complement of a singleton.}

An argument symmetric to subcase 2.2 shows that $\bigcap_{j\in\tau}K_j$ has a cone point $[n]\smallsetminus 1s$ or $[n]\smallsetminus sn$, where $s$ is the only singleton whose complement is involved in $\bigcap_{j\in\tau}K_j$.

\smallskip

\item\textbf{Subcase 3.3:} \emph{For no $j\in\tau$ is $K_j$ the star of a complement of a singleton.}

Because the subset $23\cdots n-1$ is weakly separated from $1n$ as well as every singleton, $\bigcap_{i\in\sigma}K_j$ is the star of a face of $\bigcap_{i\in\sigma}\Delta_i$, which has a cone point.

\end{itemize}

\end{itemize}

We conclude that we may apply Lemma \ref{gnerve} to show that $\bigcap_{i\in\sigma}\Delta_i$ is $G_{\sigma}$-homotopy equivalent to the nerve $\mathcal{N}(\{K_j\})$, which is a simplex, and therefore contractible.

For the inductive step, let $(k, [n]\smallsetminus k)$ be a free complementary pair of $\bigcap_{i\in\sigma}\Delta_i$. If we have $G_{\sigma}=\{e\}$ or $\left<\alpha\right>$, then
$$\Big(\bigcap_{i\in\sigma}\Delta_i\Big)\cap\dl(k),\; \Big(\bigcap_{i\in\sigma}\Delta_i\Big)\cap\dl([n]\smallsetminus k)$$
is a $G_{\sigma}$-invariant cover of $\bigcap_{i\in\sigma}\Delta_i$ for the same reason that the $\Delta_i$ cover $\hat{\Delta}_{ws}(n)$; if we have $G_{\sigma}=\left<w_0\right>$, $\left<\alpha w_0\right>$, or $G$, then
$$\Big(\bigcap_{i\in\sigma}\Delta_i\Big)\cap\dl(k),\; \Big(\bigcap_{i\in\sigma}\Delta_i\Big)\cap\dl([n]\smallsetminus k),$$
$$\Big(\bigcap_{i\in\sigma}\Delta_i\Big)\cap\dl(n+1-k),\; \Big(\bigcap_{i\in\sigma}\Delta_i\Big)\cap\dl([n]\smallsetminus(n+1-k))$$
is a $G_{\sigma}$-invariant cover of $\bigcap_{i\in\sigma}\Delta_i$. In either case, every intersection of the subcomplexes in the cover has at least one fewer free complementary pair. Therefore, by induction, each such intersection is nonempty and equivariantly contractible.
\end{proof}

\section{Proof of Theorem \ref{ss}}

In this section, we again assume that $n\geq 4$.

Recall that the group $G=\left<\alpha,w_0\right>$ is also a group of symmetries of the simplicial complex $\hat{\Delta}_{ss}(n)$. We begin to prove Theorem \ref{ss} by defining the following subcomplex of $\hat{\Delta}_{ss}(n)$:

\begin{defn}
We let $K\subset\hat{\Delta}_{ss}(n)$ be the vertex-induced subcomplex whose vertices are the singleton subsets of $[n]$ and their complements.
\end{defn}

For example, see the left side of Figure 2, in which $K$ is the square formed by the vertices $2$, $3$, $124$, and $134$.

\begin{prop}
The subcomplex $K$ is simplicially isomorphic to the boundary of an $(n-2)$-dimensional cross polytope.
\end{prop}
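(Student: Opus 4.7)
The plan is to identify $K$ explicitly as a clique complex and then show that its non-edges form a perfect matching of ``antipodal'' pairs, which is precisely the defining combinatorial structure of the boundary of a cross-polytope.

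First I would pin down the vertex set of $K$. The frozen singletons are exactly $\{1\}$ and $\{n\}$ (these are initial and final segments), and the frozen complements of singletons are $[n]\smallsetminus\{1\}$ and $[n]\smallsetminus\{n\}$. Hence the vertices of $K$ are the $2(n-2)$ subsets
\[
\{2\},\{3\},\dots,\{n-1\},\quad [n]\smallsetminus\{2\},\,[n]\smallsetminus\{3\},\dots,\,[n]\smallsetminus\{n-1\},
\]
which matches the vertex count of the boundary of the $(n-2)$-dimensional cross-polytope.

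Next I would determine exactly which pairs of these vertices are \emph{not} strongly separated, by a short case analysis. For $k\neq \ell$ in $\{2,\dots,n-1\}$: (i) two singletons $\{k\}$ and $\{\ell\}$ with $k<\ell$ satisfy $\{k\}\prec\{\ell\}$; (ii) two complements $[n]\smallsetminus\{k\}$ and $[n]\smallsetminus\{\ell\}$ with $k<\ell$ have symmetric differences $\{\ell\}$ and $\{k\}$, which again satisfy $\prec$; (iii) a singleton $\{k\}$ and a complement $[n]\smallsetminus\{\ell\}$ with $k\neq\ell$ satisfy $\{k\}\subset[n]\smallsetminus\{\ell\}$, so one of the symmetric differences is empty and strong separation holds vacuously. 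All three cases therefore yield strongly separated pairs. On the other hand, for the pair $\{k\}$ and $[n]\smallsetminus\{k\}$ with $k\in\{2,\dots,n-1\}$, the symmetric differences are $\{k\}$ and $[n]\smallsetminus\{k\}$, and since $[n]\smallsetminus\{k\}$ contains both elements smaller than $k$ and elements larger than $k$, neither $\prec$ relation holds. Hence these are the \emph{only} non-edges of $K$.

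Finally, since $K$ is a vertex-induced subcomplex of the clique complex $\hat{\Delta}_{ss}(n)$, it is itself the clique complex of its $1$-skeleton. The non-edges of that $1$-skeleton form the perfect matching $\bigl\{\{\{k\},[n]\smallsetminus\{k\}\}:k=2,\dots,n-1\bigr\}$ on $2(n-2)$ vertices, so a subset of vertices spans a face of $K$ if and only if it contains no complete pair $\bigl(\{k\},[n]\smallsetminus\{k\}\bigr)$. This is by definition the face structure of the boundary of an $(n-2)$-dimensional cross-polytope, with the antipodal pairing $\{k\}\leftrightarrow[n]\smallsetminus\{k\}$, which gives the desired simplicial isomorphism. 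There is no real obstacle beyond the case check in the middle paragraph; the main thing to be careful about is the vacuous strong-separation case when one of the symmetric differences is empty.
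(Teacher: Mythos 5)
Your proof is correct and takes essentially the same approach as the paper: both identify $K$ as the clique complex on $n-2$ complementary pairs of vertices in which the only non-edges are the pairs $\{\{k\},[n]\smallsetminus\{k\}\}$, and both then invoke the standard description of the boundary of a cross-polytope as the clique complex of a cocktail-party graph. The paper states the separation facts without the case analysis; you simply fill in those details.
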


\begin{proof}
The boundary of an $(n-2)$-dimensional cross polytope is the clique complex of a graph of $n-2$ pairs of antipodal vertices, each of which is connected to every vertex except for its antipode. The subcomplex $K$ has $n-2$ pairs of complementary vertices (the singleton $k$ and its complement $[n]\smallsetminus k$), each of which is separated from every vertex except for its complement.
\end{proof}

As a cross polytope is homeomorphic to a ball, its boundary is homeomorphic to a sphere, so that $K$ is homeomorphic to $S^{n-3}$. We note that $G$ preserves $K$, and acts on it as follows: $\alpha$ acts by set complementation, which passes to the antipodal map on $S^{n-3}$, and $w_0$ acts by exchanging each singleton $k$ with $n+1-k$ and each complement $[n]\smallsetminus k$ with $[n]\smallsetminus(n+1-k)$, which, if $S^{n-3}$ is given the usual embedding in $\mathbf{R}^{n-2}$ with the axes labelled as $x_2$ through $x_{n+1-k}$, passes to permuting the axes by exchanging each $x_k$ with $x_{n+1-k}$. 

To prove Theorem \ref{ss}, we will define a $G$-map $\pi\colon|\hat{\Delta}_{ss}(n)|\to|K|$ that we will prove to be a $G$-deformation retraction by a contractible carrier argument. 

\subsection{Defining a map $\pi\colon|\hat{\Delta}_{ss}(n)|\to|K|$}

The following lemma will allow us to indirectly define a map $\pi$:

\begin{lem}\label{inducecontsmap}
Assume we have a function $\pi'\colon \Sd(\hat{\Delta}_{ss}(n))\to \Sd(K)$ defined on the vertices of $\Sd(\hat{\Delta}_{ss}(n))$ and with the following property: For each face $\tau=\{\sigma_1\subset\cdots\subset\sigma_m\}$ of $\Sd(\hat{\Delta}_{ss}(n))$, where each $\sigma_i$ is a face of $\hat{\Delta}_{ss}(n)$, we have that $\bigcup_{i=1}^m\pi'(\sigma_i)$ is a face of $K$. Then $\pi'$ induces a map $\pi\colon|\hat{\Delta}_{ss}(n)| \to|K|$. If $\pi'$ is $G$-equivariant, then so is $\pi$.
\end{lem}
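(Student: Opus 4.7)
The plan is to construct $\pi$ as the piecewise-affine extension of $\pi'$, viewed via the natural homeomorphism $|\hat{\Delta}_{ss}(n)|\cong|\Sd(\hat{\Delta}_{ss}(n))|$. Concretely, every point $x\in|\Sd(\hat{\Delta}_{ss}(n))|$ lies in the open interior of a unique face $\tau=\{\sigma_1\subset\cdots\subset\sigma_m\}$ of $\Sd(\hat{\Delta}_{ss}(n))$ and therefore has a unique expression
$$x=\sum_{i=1}^m t_i\, b(\sigma_i),\quad t_i>0,\ \sum_{i=1}^m t_i=1,$$
where $b(\sigma_i)$ denotes the barycenter of $|\sigma_i|\subset|\hat{\Delta}_{ss}(n)|$, i.e.\ the vertex of $\Sd(\hat{\Delta}_{ss}(n))$ corresponding to $\sigma_i$. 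I set
$$\pi(x)=\sum_{i=1}^m t_i\, b(\pi'(\sigma_i))\ \in\ |K|,$$
where $b(\pi'(\sigma_i))$ is the barycenter of $|\pi'(\sigma_i)|\subset|K|$.

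The heart of the argument is verifying that $\pi$ is well-defined, lands in $|K|$, and is continuous. Landing in $|K|$ uses the hypothesis directly: letting $\rho=\bigcup_{i=1}^m\pi'(\sigma_i)$, which is a face of $K$ by assumption, each $b(\pi'(\sigma_i))$ lies in the convex set $|\rho|$, so any convex combination of these barycenters remains in $|\rho|\subset|K|$. Well-definedness along the boundary of $|\tau|$ is automatic: a point in the interior of a subface $\tau'\subsetneq\tau$ is precisely one with $t_i=0$ for $\sigma_i\notin\tau'$, and dropping the corresponding zero terms gives the defining formula for $\tau'$. Continuity then follows because the restriction of $\pi$ to each closed simplex $|\tau|$ is affine and the definitions agree on overlaps, so the gluing lemma applies.

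For the equivariance claim, note that $G$ acts by simplicial automorphisms on both complexes, so the induced actions on geometric realizations are affine on each simplex and satisfy $g\cdot b(\sigma)=b(g\sigma)$ for every face $\sigma$. Since $g$ preserves inclusions, $gx=\sum_i t_i\, b(g\sigma_i)$ lies in the interior of $g\tau=\{g\sigma_1\subset\cdots\subset g\sigma_m\}$, so computing from that face and applying $G$-equivariance of $\pi'$ gives
$$\pi(gx)=\sum_i t_i\, b(\pi'(g\sigma_i))=\sum_i t_i\, b(g\pi'(\sigma_i))=\sum_i t_i\, g\cdot b(\pi'(\sigma_i))=g\pi(x),$$
as desired. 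There is no genuine obstacle here: the content of the lemma is the observation that the hypothesis on unions of $\pi'$-values supplies exactly the convexity one needs to extend affinely across each simplex of the barycentric subdivision, after which everything is standard PL bookkeeping.
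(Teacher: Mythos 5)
Your proof is correct and follows essentially the same route as the paper's: extend $\pi'$ affinely across each simplex of the barycentric subdivision, sending a vertex $\sigma$ of $\Sd(\hat{\Delta}_{ss}(n))$ to the barycenter of $|\pi'(\sigma)|$ in $|K|$, and use the hypothesis that $\bigcup_i\pi'(\sigma_i)$ is a face of $K$ precisely to guarantee that the resulting convex combination lies in a single closed simplex of $|K|$. You spell out the well-definedness on shared subfaces, the gluing argument for continuity, and the equivariance computation somewhat more explicitly than the paper does, but there is no substantive difference in method.
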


\begin{proof}
We recall that there are natural homeomorphisms between the geometric realizations of a simplicial complex and its barycentric subdivision, $|\hat{\Delta}_{ss}(n)|\cong |\Sd(\hat{\Delta}_{ss}(n))|$ and $|K|\cong |\Sd(K)|$. Thus we may define a map $|\hat{\Delta}_{ss}(n)|\to |K|$ by defining first where the vertices of $\Sd(\hat{\Delta}_{ss}(n))$ are sent, and then sending any convex combination of some vertices that form a face to the corresponding convex combination of their images in $|K|$. We need only check that those convex combinations exist; that is, for any face $\tau=\{\sigma_1\subset\cdots\subset\sigma_m\}$ that the images of each vertex $\sigma_i\in\tau$ lie on the geometric realization of the same face $\nu\in K$.

We have that the vertex of the barycentric subdivision $\nu'\in \Sd(K)$ lies in the geometric realization of a face of $K$, $|\nu|\subset |K|$, if and only if the vertex set of $\nu'$ is contained in the vertex set of $\nu$. Thus if $\bigcup_{\sigma_i\in\tau} \pi'(\sigma_i)$ is a face of $K$, then all of the $\pi'(\sigma_i)$ will lie on the same face, so that we will have defined a map $\pi\colon|\hat{\Delta}_{ss}(n)|\to |K|$, as desired.

Finally, we remark that if $\pi'$ respects the action of $G$, then so does the induced map $\pi$, because taking corresponding convex combinations respects the action of $G$. 
\end{proof}

We now define a map $\pi'\colon \Sd(\hat{\Delta}_{ss}(n))\to \Sd(K)$ on the vertices of the barycentric subdivision to which we will be able to apply the preceding lemma. For $\sigma$ a face of $\hat{\Delta}_{ss}(n)$ and for $v$ a vertex of $K$, we let $v\in \pi'(\sigma)$ if and only if $\sigma\cup\{v\}$ is a face of $\hat{\Delta}_{ss}(n)$ but $\sigma\cup\{\alpha(v)\}$ is not. By design, $\pi'(\sigma)$ will not contain any complementary pairs, so that as long as it is nonempty it will in fact be a face of $K$, i.e. a vertex of $\Sd(K)$. We also note that $\pi'$ respects the action of $G$.

We now prove for any face $\sigma\in \hat{\Delta}_{ss}(n)$ that $\pi'(\sigma)$ is in fact nonempty via the following lemma:

\begin{lem}\label{imagenonempty}
There is no face $\sigma\in\hat{\Delta}_{ss}(n)$ such that for each $k\in \{2, 3, \dots, n-1\}$, we have that either both or neither of $\sigma\cup\{k\}$, $\sigma\cup\{[n]\smallsetminus k\}$ is a face of $\hat{\Delta}_{ss}(n)$. 
\end{lem}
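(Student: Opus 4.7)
I will prove this by contradiction: assume such a face $\sigma$ exists. For $k \in \{2,\ldots,n-1\}$, write
\[
B(\sigma)=\{k : \sigma\cup\{k\}\notin\hat{\Delta}_{ss}(n)\},\qquad B^*(\sigma)=\{k : \sigma\cup\{[n]\smallsetminus k\}\notin\hat{\Delta}_{ss}(n)\}.
\]
Unpacking the definition of strong separation, $k\in B(\sigma)$ iff some $A\in\sigma$ satisfies $k\notin A$ and $\min A<k<\max A$ (so $A$ ``straddles'' $k$), while $k\in B^*(\sigma)$ iff some $A\in\sigma$ satisfies $k\in A$ and $\min([n]\smallsetminus A)<k<\max([n]\smallsetminus A)$. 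The hypothesis is exactly that $B(\sigma)=B^*(\sigma)$, and my goal is to exhibit a $k$ in the symmetric difference of these two sets.

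As a preliminary reduction, I would rule out singletons and cosingletons from $\sigma$. If $\{k\}\in\sigma$ for some $k\in\{2,\ldots,n-1\}$, then $k\in B^*(\sigma)$ immediately, but every other $A\in\sigma$ strongly separated from $\{k\}$ either contains $k$ or lies entirely on one side of $k$, so no $A$ can block $k$ and $k\notin B(\sigma)$---a contradiction. The case $[n]\smallsetminus\{k\}\in\sigma$ is handled symmetrically, noting that the complementation action $\alpha$ is a simplicial automorphism exchanging $B(\sigma)$ with $B^*(\sigma^c)$ (where $\sigma^c=\{[n]\smallsetminus A:A\in\sigma\}$), so I am free to pass to $\sigma^c$ at any point.

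The main step is an extremal argument at the endpoints of the range spanned by $\sigma$. Let $m=\min\{\min A:A\in\sigma\}$ and $M=\max\{\max A:A\in\sigma\}$. Suppose $m\geq2$, and pick $A_0\in\sigma$ with $\min A_0=m$. Since $A_0$ is non-frozen (and by the reduction not a singleton), we have $\{1,\ldots,m-1\}\subseteq A_0^c$, forcing $\min A_0^c<m$, and $A_0\subsetneq\{m,\ldots,n\}$, forcing $\max A_0^c>m$; hence $m\in B^*(A_0)\subseteq B^*(\sigma)$. On the other hand, every $A\in\sigma$ has $\min A\geq m$, so $m\notin B(A)$ for any $A$, giving $m\notin B(\sigma)$. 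Thus $m\in B^*(\sigma)\smallsetminus B(\sigma)$---contradiction. The symmetric argument using $M$ handles the case $M\leq n-1$; moreover, if every $A\in\sigma$ contains $1$ (resp.\ $n$), then the same argument applied to $\sigma^c$ produces a witness in $B(\sigma)\smallsetminus B^*(\sigma)$.

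The main obstacle is the remaining mixed case: $m=1$, $M=n$, with $\sigma$ containing both sets that include $1$ and sets that do not (and likewise for $n$). My plan here is to refine the extremal choice by setting $m^*=\min\{\min A:A\in\sigma,\,1\notin A\}\geq 2$, achieved by some $A_0\in\sigma$; the previous argument again gives $m^*\in B^*(\sigma)$. To rule out $m^*\in B(\sigma)$, I would examine, for any $A_1\in\sigma$ with $1\in A_1$, the strong separation between $A_0$ and $A_1$: since $1\in A_1\smallsetminus A_0$, the ordering $A_0\smallsetminus A_1\prec A_1\smallsetminus A_0$ is precluded unless $A_0\subseteq A_1$ (in which case $m^*\in A_1$ and $A_1$ cannot block $m^*$), while the other ordering $A_1\smallsetminus A_0\prec A_0\smallsetminus A_1$ forces $\max(A_1\smallsetminus A_0)<m^*$, so any element of $A_1$ exceeding $m^*$ must already lie in $A_0$. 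Iterating this rigid inclusion constraint---or passing to a residual subconfiguration---should eventually yield a witness $k\in B(\sigma)\triangle B^*(\sigma)$. The crux and main technical difficulty of the proof is tracking how strong separation prevents sets containing $1$ from overshooting the extremizer $m^*$ without containing it.
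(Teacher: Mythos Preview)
Your preliminary reductions (ruling out singletons and cosingletons in $\sigma$) and your extremal arguments for the cases $m\geq 2$, $M\leq n-1$, and ``all $A\in\sigma$ contain $1$'' (resp.\ $n$) are correct and clean. The genuine gap is in the mixed case, which you yourself flag as the crux: the goal of showing $m^*\notin B(\sigma)$ is simply not attainable in general, so the sketch cannot be completed as written. For a concrete obstruction, take $n=6$ and $\sigma=\{\,\{1,5\},\,\{2,5\},\,\{2,5,6\}\,\}$. These three sets are pairwise strongly separated, none is frozen, none is a singleton or cosingleton, and $m=1$, $M=6$, so this falls squarely into your mixed case. Here $m^*=2$ (witnessed by $A_0=\{2,5\}$), and indeed $2\in B^*(\sigma)$; but $A_1=\{1,5\}$ also blocks $2$, so $2\in B(\sigma)$. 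Your observed constraint ``every element of $A_1$ exceeding $m^*$ lies in $A_0$'' holds (the only such element is $5$), yet $A_1$ still blocks $m^*$. The actual witness in this example is $k=3\in B(\sigma)\smallsetminus B^*(\sigma)$, not $m^*$. So the extremal quantity $m^*$ does not locate the witness, and ``iterating the rigid inclusion constraint'' remains a hope rather than an argument.

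The paper's proof takes a rather different route. Instead of a single extremal element, it argues by contradiction with $n$ minimal: encoding each $A\in\sigma$ as a $0$--$1$ string, the minimality hypothesis reduces to the situation where only slot $1$ is always initial and only slot $n$ is always final, so that for every interior slot $j$ some vertex has an interior $1$ there and some other vertex an interior $0$. It then builds an increasing chain $2=k_1<k_2<\cdots<k_m=n$, where $k_i$ is the first $0$ after slot $k_{i-1}$ in a suitably chosen vertex $v_{k_{i-1}}$, and uses strong separation between consecutive $v_{k_{i-1}},v_{k_i}$ to propagate an inclusion on the final slot back down to $v_2$; this forces two vertices of $\sigma$ to restrict to $101$ and $010$ on slots $1,2,n$, contradicting strong separation. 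If you want to salvage your approach, you will need a comparable chaining mechanism to move from $m^*$ to the true witness; a single extremal step does not suffice.
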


\begin{proof}
We argue by contradiction, assuming $n$ is minimal such that a counterexample $\sigma$ exists. 

Encode a vertex $v$ of $\hat{\Delta}_{ss}(n)$ --- which may also be viewed as a subset of $[n]$ --- as a sequence of $n$ $0$s and $1$s, with a $0$ in the $k$th slot if $k\notin v$ and a $1$ in the $k$th slot if $k\in v$. For example, $001100111$ corresponds to $34789\subset[9]$. Each segment of zeros or ones in the sequence is either initial, final, or interior. We say that a slot is initial, final, or interior if it lies in an initial, final, or interior segment of $0$s or $1$s, respectively. Note that two sequences are \emph{not} strongly separated from one another if and only if for some slots $k_1<k_2<k_3$, one sequence restricts to $101$ and the other restricts to $010$.

These sequences clearly indicate the singletons and complements of singletons from which they are strongly separated: 

\begin{itemize}
\item If a slot $j$ is in an initial or final segment of $0$s or $1$s, then both the singleton $\{j\}$ corresponding to the slot and its complement $[n]\smallsetminus\{j\}$ are strongly separated from the vertex corresponding to the sequence. 
\item A singleton $\{j\}$ corresponding to an interior $0$ in slot $j$ will not be strongly separated from the vertex while its complement $[n]\smallsetminus\{j\}$ will be. 
\item A singleton $\{j\}$ corresponding to an interior $1$ in slot $j$ will be strongly separated from the vertex while its complement $[n]\smallsetminus\{j\}$ will not be.
\end{itemize}

In the counterexample $\sigma$, each slot $j$ falls into one of two categories: either both the singleton $\{j\}$ and its complement $[n]\smallsetminus \{j\}$ are separated from every vertex in $\sigma$, i.e. $j$ is always in an initial or final segment, or else some vertex is not strongly separated from the singleton $\{j\}$ and some other vertex is not strongly separated from the complement $[n]\smallsetminus\{j\}$, i.e. one vertex has an interior $1$ in slot $j$ and another has an interior $0$. If there were more than one slot that is always initial or more than one slot that were always final, we could remove one of the extra slots and the collection would remain a counterexample, so that $n$ would not be minimal.

Thus there is only one slot that is always initial and one that is always final, so that for each $j$ in the interval $[2,n-1]$ there is a vertex with a noninitial, nonfinal $1$ in slot $j$. Let $v_j$ be such a vertex, with the segment of $1$s containing slot $j$ extending as far to the right as possible. Let $k_1=2$, and for $k_{i-1}\in[2,n-1]$, let $k_i$ be the slot of the first zero after slot $k_{i-1}$ in $v_{k_{i-1}}$. The sequence of $k_i$ is a strictly increasing sequence of integers, and the fact that the $1$ in slot $k_{i-1}$ of $v_{k_{i-1}}$ is nonfinal means that for $k_{i-1}\in[2,n-1]$, we must have that $k_i\in[2,n]$, with $k_1\in [2,n-1]$, so that we must for some $m$ have $k_1$ through $k_{m-1}$ all in $[2,n-1]$, with $k_m=n$. 

We also must have that $v_{k_i}$ has a zero in some slot in the interval $[k_{i-1},k_i-1]$; otherwise, $v_{k_i}$ would contain the segment $[{k_{i-1}},{k_i}]$ in a noninitial and nonfinal segment of $1$s, which ends further to the right than the ending point of slot ${k_i}-1$ of the corresponding segment in $v_{k_{i-1}}$, contradicting the definition of $v_{k_{i-1}}$. We also need that $v_{k_i}\cap[k_i+1,n]\supset v_{k_{i-1}}\cap[k_i+1,n]$; otherwise, for some slot in $[k_{i-1},k_i-1]$, for the slot $k_i$, and for some slot in $[k_i+1,n]$, we have that $v_{k_{i-1}}$ restricts to $101$ and $v_{k_i}$ restricts to $010$, meaning that they are not strongly separated from one another, a contradiction of the assumption that $v_{k_i}\cap[k_i+1,n]\not\supset v_{k_{i-1}}\cap[k_i+1,n]$.

We have that $k_m=n$, so that $v_{k_{m-1}}$ has a $0$ in slot $n$. By induction, as each $v_{k_i}\cap\{n\}\supset v_{k_{i-1}}\cap \{n\}$ we must have that $v_{k_1}=v_2$ has a $0$ in slot $n$, so that $v_2$ restricts to $101$ in slots $1<2<n$. By a symmetric argument, $\sigma$ must contain a vertex that restricts to $010$ in the same slots, but these two vertices cannot be strongly separated from one another --- a contradiction of the assumption that a counterexample exists.
\end{proof}

Thus we must have that each $\pi'(\sigma)$ is nonempty, so that $\pi'$ is in fact a function from the vertices of $\Sd(\hat{\Delta}_{ss}(n))$ to the vertices of $\Sd(K)$. By Lemma \ref{inducecontsmap}, $\pi'$ will induce a map $\pi\colon|\hat{\Delta}_{ss}(n)|\to|K|$ if for each face $\tau= \{\sigma_1\subset\cdots\subset\sigma_m\}$ of $\Sd(\hat{\Delta}_{ss}(n))$ we have that $\bigcup_i\pi(\sigma_i)$ is a face of $K$.

If a vertex $v$ of $K$ is in $\pi(\sigma_i)$, then we have that $v$ is strongly separated from each vertex of $\sigma_i$, so that for $j<i$, we have that $v$ is strongly separated from each vertex of $\sigma_j\subset \sigma_i$, so that $\alpha(v)$ is not in $\pi(\sigma_j)$.  Also, $\alpha(v)$ is \emph{not} strongly separated from some vertex in $\sigma_i$, which for $j>i$ must also be a vertex of $\sigma_j\supset\sigma_i$, so that $\alpha(v)$ is not in $\pi(\sigma_j)$. Thus $\bigcup_j\pi(\sigma_j)$ does not contain any complementary pairs, and any set of vertices of $K$ not containing complementary pairs forms a face.

\subsection{Proving that $\pi$ is a $G$-deformation retraction}

We let $\iota\colon|K|\hookrightarrow|\hat{\Delta}_{ss}(n)|$ be the inclusion map. Each face of $K$ (i.e. each vertex of $\Sd(K)$) is strongly separated from every vertex of $K$ other than the complements of its vertices, so that $\pi'$ acts as the identity on vertices of $\Sd(K)\subset \Sd(\hat{\Delta}_{ss}(n))$ and thus induces the identity on $|K|\cong|\Sd(K)|\subset |\hat{\Delta}_{ss}(n)|$. Hence $\pi\circ\iota$ is the identity on $|K|$. Thus to show that $\pi$ is a deformation retraction, we need only show that $\iota\circ\pi$ is homotopic to the identity map on $|\hat{\Delta}_{ss}(n)|$.

By Lemma \ref{gcarry}, it is enough to find a valid common contractible carrier for the identity map on $|\Sd(\hat{\Delta}_{ss}(n))|\cong |\hat{\Delta}_{ss}(n)|$ and for $\iota\circ\pi$. If $\tau=\{\sigma_1\subset\cdots\subset \sigma_m\}$ is a face of $\Sd(\hat{\Delta}_{ss}(n))$, let $C(\tau)=\st(\sigma_1)$, taken here as the star of a face of $\hat{\Delta}_{ss}(n)$, rather than the star of a vertex of $\Sd(\hat{\Delta}_{ss}(n))$. For faces $\tau,\tau'\in\Sd(\hat{\Delta}_{ss}(n))$, we have that $\tau\subset\tau'$ implies $\sigma_1\supset\sigma'_1$, which implies $\st(\sigma_1)\subset\st(\sigma'_1)$. Since $\st(\sigma_1)$ is $G_{\sigma_1}$-contractible by a straight line homotopy to the center of $\sigma_1$, with $G_{\tau}\subset G_{\sigma_1}$, we have that $C(\tau)$ is $G_{\tau}$-contractible, with $C(g\tau)=gC(\tau)$.

The identity is carried by $C$ because the image of $|\tau|$ is contained in $|\sigma_m|\subset\st(\sigma_1)$.

For each $v\in \bigcup_i\pi(\sigma_i)$, there is some $\sigma_i\cup\{v\}$ that is a face of $\hat{\Delta}_{ss}(n)$. Thus $\sigma_1\cup\{v\}\subset\sigma_i\cup\{v\}$ is also a face of $\hat{\Delta}_{ss}(n)$.  We know that $\hat{\Delta}_{ss}(n)$ is a clique complex in which $\sigma_1$ and $\bigcup_i\pi(\sigma_i)$ are both cliques, and $\sigma_1\cup\{v\}$ a clique for each $v\in\bigcup_i\pi(\sigma_i)$, so that $\sigma_1\cup\bigcup_i\pi(\sigma_i)$ is a clique, and thus forms a face of $\hat{\Delta}_{ss}(n)$. Hence $\bigcup_i\pi(\sigma_i)$ is in the star of $\sigma_1$, with the image of $|\tau|$ contained in $|\bigcup_i\pi(\sigma_i)|$, so that $\pi(|\tau|)\subset|\bigcup_i\pi(\sigma_i)|\subset C(\tau)$, as desired. 

\section{Homeomorphism Types} 

In addition to homotopy types, we may also consider the homeomorphism types of $\hat{\Delta}_{ss}(n)$ and $\hat{\Delta}_{ws}(n)$. Results by Leclerc and Zelevinsky \cite[Theorem 1.6]{Leclerc} and by Danilov, Karzanov, and Koshevoy \cite[Theorem B]{DKK} on the maximal sizes of weakly separated and strongly separated collections, respectively, imply that both $\hat{\Delta}_{ss}(n)$ and $\hat{\Delta}_{ws}(n)$ are pure of dimension ${n-1 \choose 2}-1$. Based on these results and on Figures 1 and 2, one may question whether or not  $\hat{\Delta}_{ss}(n)\cong S^{n-3}\times B^{{n-2\choose 2}}$ and $\hat{\Delta}_{ws}(n)\cong B^{{n-1\choose 2}-1}$. In fact, neither of these is true. This is due to the fact that the boundary of $\hat{\Delta}_{ss}(5)$ (expected to be homeomorphic to $S^2\times S^2$) was found to have nontrivial reduced homology groups 
$$\tilde{H}_2(\partial\hat{\Delta}_{ss}(5))\cong\mathbf{Z}, \quad \tilde{H}_3(\partial\hat{\Delta}_{ss}(5))\cong\mathbf{Z}^9, \quad \tilde{H}_4(\partial\hat{\Delta}_{ss}(5))\cong\mathbf{Z},$$
and that the boundary of $\hat{\Delta}_{ws}(5)$ (expected to be homeomorphic to $S^4$) was found to have nontrivial reduced homology groups
$$\tilde{H}_2(\partial\hat{\Delta}_{ws}(5))\cong\mathbf{Z}, \quad \tilde{H}_4(\partial\hat{\Delta}_{ws}(5))\cong\mathbf{Z}.$$

The task of formulating a new conjecture on the homeomorphism types of $\hat{\Delta}_{ss}(n)$ and $\hat{\Delta}_{ws}(n)$ is made more difficult by virtue of the fact that $\partial\hat{\Delta}_{ss}(n)$ and $\partial\hat{\Delta}_{ws}(n)$ are not, in general, manifolds. Using the software package {\tt polymake}, we were able to determine that neither are manifolds even in the $n=5$ case. If they were, then the link of every $d$-dimensional face would have the homology of a $(3-d$)-sphere; however, in the case of $\partial\hat{\Delta}_{ws}(5)$, the links of the vertices $15$ and $234$ were found to have reduced homology groups $\tilde{H}_1$ and $\tilde{H}_3$ isomorphic to $\mathbf{Z}$. Further computations showed that $\lk(\{15, 234\})$, the only link of an edge in the boundary without the homology of a $2$-sphere, is the disjoint union of two boundaries of octahedra, as pictured in Figure $3$. In the case of $\partial\hat{\Delta}_{ss}(5)$, the link of the face $\{2, 23, 234\}$ within $\partial\hat{\Delta}_{ss}(5)$, which is expected to have the homology of a $1$-sphere, is homeomorphic to two disjoint circles, as pictured in Figure $4$. 

\begin{figure}
\begin{tikzpicture}
[inner sep=0pt, minimum size=7mm]

\node (134) at (-4.1,-2) [circle, draw, fill=white] {134};
\node (245) at (3.9,-2) [circle, draw, fill=white] {245};

\node (34) at (-6.3,-0.25) [circle, draw, fill=white] {34};
\node (35) at (-2.3,-0.25) [circle, draw, fill=white] {35};
\node (14) at (1.7,-0.25) [circle, draw, fill=white] {14};
\node (23) at (5.7,-0.25) [circle, draw, fill=white] {23};

\node (124) at (-6,1) [circle, draw, fill=white] {124};
\node (125) at (-2,1) [circle, draw, fill=white] {125};
\node (145) at (2,1) [circle, draw, fill=white] {145};
\node (235) at (6,1) [circle, draw, fill=white] {235};

\node (25) at (-4.1,2.5) [circle, draw, fill=white] {25};
\node (13) at (3.9,2.5) [circle, draw, fill=white] {13};

\draw[-] (25) to (124);
\draw[-] (25) to (125);
\draw[-] (25) to (34);
\draw[-] (25) to (35);
\draw[-] (134) to (34);
\draw[-] (134) to (35);
\draw[style=dashed] (134) to (124);
\draw[style=dashed] (134) to (125);
\draw[-] (34) to (35);
\draw[-] (34) to (124);
\draw[-] (35) to (125);
\draw[style=dashed] (124) to (125);

\draw[-] (13) to (145);
\draw[-] (13) to (235);
\draw[-] (13) to (14);
\draw[-] (13) to (23);
\draw[-] (245) to (14);
\draw[-] (245) to (23);
\draw[style=dashed] (245) to (145);
\draw[style=dashed] (245) to (235);
\draw[-] (14) to (23);
\draw[-] (14) to (145);
\draw[-] (23) to (235);
\draw[style=dashed] (145) to (235);

\begin{pgfonlayer}{background}
\fill[gray!30] (-4.1,-2) to (-6.3,-0.25) to (-2.3,-0.25);
\fill[gray!30] (-6.3,-0.25) to (-2.3,-0.25) to (-4.1,2.5);
\fill[gray!30] (-6.3,-0.25) to (-6,1) to (-4.1,2.5);
\fill[gray!30] (-2.3,-0.25) to (-2,1) to (-4.1,2.5);

\fill[gray!30] (3.9,-2) to (1.7,-0.25) to (5.7,-0.25);
\fill[gray!30] (3.9,2.5) to (1.7,-0.25) to (5.7,-0.25);
\fill[gray!30] (1.7,-0.25) to (2,1) to (3.9,2.5);
\fill[gray!30] (5.7,-0.25) to (6,1) to (3.9,2.5);
\end{pgfonlayer}

\end{tikzpicture}
\caption{The link $\lk(\{15, 234\})$ within $\partial\hat{\Delta}_{ws}(5)$.}
\end{figure}
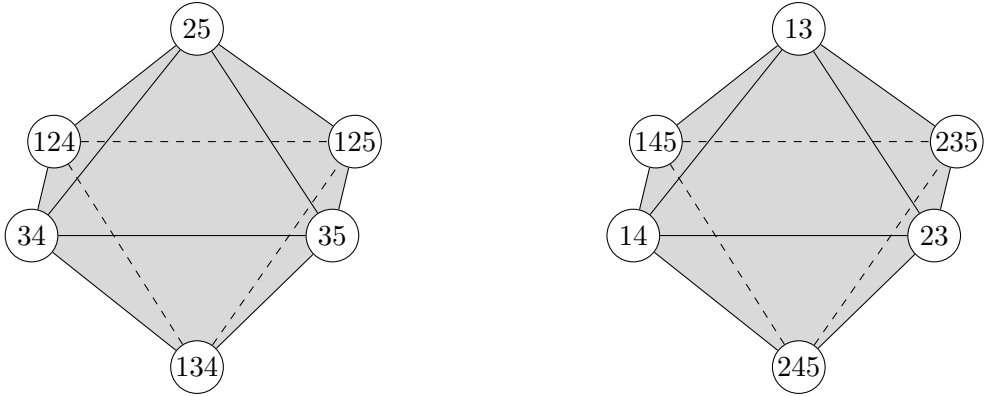

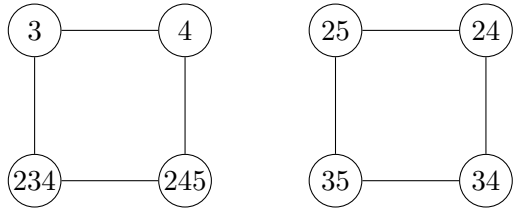
\begin{figure}
\begin{tikzpicture}
[inner sep=0pt, minimum size=7mm]

\node (3) at (-3,1) [circle, draw] {3};
\node (4) at (-1,1) [circle, draw] {4};
\node (234) at (-3,-1) [circle, draw] {234};
\node (245) at (-1,-1) [circle, draw] {245};

\node (25) at (1,1) [circle, draw] {25};
\node (24) at (3,1) [circle, draw] {24};
\node (35) at (1,-1) [circle, draw] {35};
\node (34) at (3,-1) [circle, draw] {34};

\draw[-] (3) to (4);
\draw[-] (3) to (234);
\draw[-] (4) to (245);
\draw[-] (234) to (245);

\draw[-] (25) to (24);
\draw[-] (25) to (35);
\draw[-] (35) to (34);
\draw[-] (24) to (34);

\end{tikzpicture}
\caption{The link $\lk(\{2, 23, 234\})$ within $\partial\hat{\Delta}_{ss}(5)$.}
\end{figure}

We note that the homology of $\partial\hat{\Delta}_{ws}(5)$ in particular indicates that we may have a homeomorphism $\partial\hat{\Delta}_{ws}(5)\cong S^2\vee S^4$, which is also not a manifold. Hence there may be some hope that, in general, $\partial\hat{\Delta}_{ws}(n)$ is a wedge of spheres. Computing the homology of $\partial\hat{\Delta}_{ws}(6)$ already becomes too time-consuming, however, and therefore no conjecture on the homeomorphism type of $\partial\hat{\Delta}_{ws}(n)$ has been posed.

\section*{Acknowledgments}

This research was conducted at the 2011 summer REU (Research Experience for Undergraduates) program at the University of Minnesota, Twin Cities, and was supported by NSF grants DMS-1001933 and DMS-1067183. The program was directed by Profs. Gregg Musiker, Pavlo Pylyavskyy, and Vic Reiner, whom the authors thank for their leadership and support. The authors would like to express their particular gratitude to Prof. Reiner for introducing them to this problem and for his indispensable guidance throughout the research process.

\end{document}